\newtheorem{thm}{Theorem}[section]
\newtheorem{lem}[thm]{Lemma}
\newtheorem{cor}[thm]{Corollary}
\newtheorem{prop}[thm]{Proposition}
\theoremstyle{definition}
\newtheorem{note}[thm]{Note}
\newcommand{\R}{\mathbf{R}}
\newcommand{\ol}{\overline}
\renewcommand{\d}{\partial}
\renewcommand{\S}{\mathbf{S}}
\renewcommand{\l}{\langle}
\renewcommand{\r}{\rangle}
\renewcommand{\tilde}{\widetilde}
\renewcommand{\angle}{\measuredangle}
\DeclareMathOperator{\dist}{dist}
\DeclareMathOperator{\inte}{int}
\DeclareMathOperator{\diam}{diam}
\DeclareMathOperator{\length}{length}
\title[Pseudo-edge unfoldings of convex polyhedra]{Pseudo-edge unfoldings of convex polyhedra}
\author{Nicholas Barvinok}
\address{School of Mathematics, Georgia Institute of Technology,
Atlanta, GA 30332, USA}
\email{nbarvinok3@gatech.edu}
\urladdr{www.math.gatech.edu/~nbarvinok3/}
\author{Mohammad Ghomi}
\address{School of Mathematics, Georgia Institute of Technology,
Atlanta, GA 30332, USA}
\email{ghomi@math.gatech.edu}
\urladdr{www.math.gatech.edu/~ghomi}
\date{\today \,(Last Typeset)}
\subjclass[2010]{Primary: 52B10, 53C45; Secondary: 57N35, 05C10.}
\keywords{Edge unfolding, D\"{u}rer conjecture, almost flat convex cap, prescribed curvature, weighted spanning forest, pseudo-edge graph, isometric embedding.}
\thanks{Research of the second named author was supported in part by NSF Grant DMS--1308777.}
\begin{document}
\maketitle

\begin{abstract}
A pseudo-edge graph of a convex polyhedron $K$ is a $3$-connected embedded graph in $K$ whose vertices coincide with those of $K$, whose edges are distance minimizing geodesics, and whose faces are convex. We construct a convex polyhedron $K$ in Euclidean 3-space with a pseudo-edge graph with respect to which $K$ is not unfoldable. 
The proof is based on a result of Pogorelov on convex caps with prescribed curvature, and an unfoldability obstruction for almost flat convex caps due to Tarasov.
Our example, which has $340$ vertices, significantly simplifies an earlier construction by Tarasov, and confirms that D\"{u}rer's conjecture does not hold for pseudo-edge unfoldings.
\end{abstract}

\section{Introduction}
By a \emph{convex polyhedron} in this work we mean the boundary of  the convex hull of finitely many points in Euclidean space $\R^3$ which do not all lie in a plane.
 A well-known conjecture \cite{do:book}, attributed to the Renaissance painter Albrecht D\"{urer} \cite{durer}, states that every convex polyhedron $K$ is \emph{unfoldable}, i.e., it may be cut along some spanning tree of its edges and isometrically embedded into the plane $\R^2$. Here we study a generalization of this problem to \emph{pseudo-edges} of $K$, i.e.,  distance minimizing geodesic segments in $K$ connecting  pairs of its vertices (see Figure \ref{fig:cube} for an example of a pseudo-edge which is not an actual edge). A \emph{pseudo-edge graph} $E$ of $K$ is a 3-connected embedded graph composed of pseudo-edges of $K$, with the same vertices as those of $K$, and with faces which are \emph{convex} in $K$, i.e., the interior angles of each face of $E$ are less than $\pi$. Cutting $K$ along any spanning tree $T$ of $E$ yields a simply connected compact surface $K_T$ which admits an isometric immersion  or \emph{unfolding} $u_T\colon K_T\to\R^2$. If $u_T$ is one-to-one for some  $T$, then we say that $K$ is \emph{unfoldable} with respect to $E$. The main result of this paper is as follows:

\begin{thm}\label{thm:main}
There exists a convex polyhedron $K$ with $340$ vertices and a pseudo-edge graph with respect to which $K$ is not  unfoldable.
\end{thm}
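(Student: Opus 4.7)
The plan is to construct $K$ as the \emph{double} of an almost flat convex cap $C$, i.e., to take two congruent copies of $C$ and glue them along their common boundary polygon. Correspondingly, the pseudo-edge graph $E$ will be obtained by doubling a carefully chosen planar graph $G$ drawn on the base polygon of $C$. This reduction is natural because doubled caps automatically yield convex polyhedra, and unfoldability of the double with respect to the symmetric graph is controlled by unfoldability of the cap with respect to $G$.

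First, I would design a combinatorial object in the plane: a convex polygon $P\subset\R^2$ together with a $3$-connected straight-line planar graph $G$ whose outer face is $\partial P$, whose vertices lie in $P$, and all of whose bounded faces are convex polygons. The graph $G$ must be crafted so that Tarasov's obstruction to unfoldability of almost flat caps applies uniformly: for \emph{every} spanning tree $T$ of $G$, the unfolding of the cap cut along $T$ produces an overlap. Concretely, Tarasov's obstruction encodes how the small rotations picked up at each interior vertex (coming from the cone angle deficit) accumulate along the faces as one develops the cut surface into the plane; the combinatorics of $G$ must force these accumulated rotations to create overlaps no matter which spanning tree is chosen. This combinatorial design, which should admit a realization with only $170$ vertices per cap (to give $340$ after doubling, with the boundary polygon shared appropriately), is the heart of the argument.

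Second, I would invoke Pogorelov's theorem on convex caps with prescribed curvature: given sufficiently small positive weights $\omega_v>0$ at the interior vertices $v$ of $G$, there is a convex cap $C\subset\R^3$ over $P$ whose intrinsic metric is isometric to the metric on $P$ with a cone point of angle $2\pi-\omega_v$ inserted at each interior $v$, and whose edge graph is precisely $G$. For $\omega_v$ small, $C$ is $C^0$-close to the flat disk $P$, so straight segments of $G$ lift to geodesic arcs that are (i) distance-minimizing in $C$ (hence genuine pseudo-edges) and (ii) bound convex faces. Thus $G$ becomes a pseudo-edge graph of $C$, and after doubling, $E$ becomes a pseudo-edge graph of $K$.

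Third, I would transfer the Tarasov obstruction from the planar combinatorial model to the realized cap. Because the cap is almost flat, any unfolding of $C$ cut along a spanning tree of $G$ is a small perturbation of the corresponding development in the planar model; the overlap forced by the obstruction is therefore present for every spanning tree of $G$, and by the reflection symmetry of $K$, persists for every spanning tree of $E$. Hence $K$ is not unfoldable with respect to $E$. The main obstacle I expect is the first step: engineering a small, explicit $3$-connected convex planar graph whose every spanning tree violates Tarasov's criterion, and doing so with a vertex count that brings the doubled polyhedron down to $340$. The subsequent analytic passage from the planar blueprint to the $\R^3$ realization via Pogorelov, while delicate, is essentially a continuity argument once the combinatorial model is in hand.
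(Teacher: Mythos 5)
Your overall strategy has the right ingredients (an almost flat cap over a polygon built via Pogorelov's theorem, Tarasov's rotation/overlap obstruction, and an assembly of caps into a closed polyhedron), but the assembly step via \emph{doubling} contains a genuine gap. When you cut a closed polyhedron $K$ along a spanning tree $T$ of its pseudo-edge graph, the restriction of $T$ to one cap is not a spanning tree of that cap's graph but a forest, and Tarasov's obstruction (the monotonicity/center-of-rotation criterion) applies only to \emph{cut forests}: forests in which every component meets the boundary of the cap in exactly one vertex, so that each interior vertex has a well-defined ancestral path to the boundary. For the double of a cap over a triangle $abc$, a spanning tree of $K$ can restrict on one copy to a forest containing a path through the interior from $a$ to $b$, and on the other copy to a path from $b$ to $c$; this creates no cycle, yet neither restriction is a cut forest, so the obstruction says nothing about the resulting unfolding. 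This is exactly why the paper does not double the cap but instead places four congruent caps over the faces of a regular tetrahedron: a purely combinatorial lemma (if each face of the tetrahedral graph contains a simple arc joining two of its vertices, the union of these arcs contains a loop) forces at least one cap's restriction to be a cut forest, to which the non-monotonicity of the subdivision then applies. With only two faces this lemma fails, so your reduction breaks down unless you prove a strictly stronger cap-level obstruction covering forests whose components touch the boundary twice, which neither you nor the existing machinery supplies.

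Two smaller points. First, Pogorelov's theorem prescribes the curvatures and the planar projections of the interior vertices of the cap, but it does \emph{not} make the cap's actual edge graph equal to $G$; that stronger statement would require an equilibrium stress (Maxwell--Cremona) and would in fact yield a counterexample to D\"urer's conjecture itself. What one gets instead, for small total curvature, is that the segments of $G$ lift to unique distance-minimizing geodesics forming a pseudo-edge graph, which is what the paper proves and uses (and what your parenthetical about geodesic lifts correctly gestures at). Second, the combinatorial requirement is not ``every spanning tree of $G$ causes overlap'' but ``$G$ admits no monotone cut forest''; the paper realizes this with a non-monotone convex subdivision of the equilateral triangle having $84$ interior vertices, giving $4\times 84+4=340$ vertices, whereas your count of $170$ vertices per doubled cap does not come with any construction and, with shared boundary vertices, does not in fact sum to $340$.
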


Thus one may say that D\"{u}rer's conjecture does not hold in a purely intrinsic sense, since it is not possible to distinguish a pseudo-edge from an actual edge by means of local measurements within $K$. On the other hand, by Alexandrov's isometric embedding theorem \cite{alexandrov:polyhedra}, any 
convex polyhedron is determined up to a rigid motion by its intrinsic metric. So edges do indeed exist intrinsically, although Alexandrov's proof is not constructive and does not specify their location. A more constructive approach has been studied by Bobenko and Izmestiev \cite{bobenko-izmestiev} but that too does not yield a simple characterization for the edges. In short, the edges of convex polyhedra are not well understood from the  point of view of isometric embeddings, and, in light of the above theorem, it would now be even more remarkable if the conjecture holds.

\begin{figure}[h]
\centering
\begin{overpic}[height=1.2in]{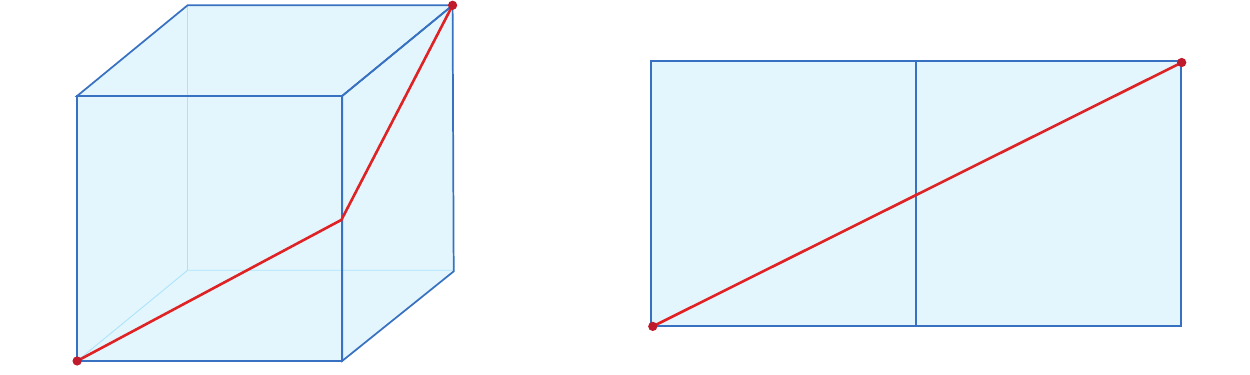}
\end{overpic}
\caption{}\label{fig:cube}
\end{figure}

Theorem \ref{thm:main}, for a polyhedron with over 19,000 vertices, was first announced in 2008  in a highly original and hitherto unpublished manuscript by Alexey Tarasov \cite{tarasov}. Although we do not understand all the details in that construction, since it is very complex, we can confirm that Tarasov's key ideas were correct, and utilize these in this work. These notions, which will be described below,  include the obstruction for unfoldability of almost flat convex caps in Section \ref{sec:pseudo}, and the double spiral configuration in Section \ref{sec:gadget}. 

The polyhedron $K$ in Theorem \ref{thm:main} is obtained by arranging $4$ congruent almost flat  convex caps over the faces of a regular tetrahedron.  These caps have $84$ interior vertices each with prescribed curvature and projection. They are constructed via a result of Pogorelov on convex caps with prescribed curvature as we describe in Section \ref{sec:caps}. In Section \ref{sec:pseudo} we study the pseudo-edges induced on a convex cap $C$ by the edge graph $G$ of  convex subdivisions of the polygon at the base of $C$. Then in Section \ref{sec:cut} we describe a necessary condition, due to Tarasov, for unfoldability of $C$ in terms of spanning forests of $G$. Next in Section \ref{sec:gadget} we construct a convex subdivision of an equilateral triangle which satisfies Tarasov's criterion. Consequently, sufficiently flat convex caps constructed over this subdivision fail to be unfoldable with respect to the induced pseudo-edge graph.  Finally in Section \ref{sec:proof} we assemble $4$ such caps to construct  $K$.

Our construction differs from Tarasov's in the following  respects. First, we use only two spiral paths, as opposed to three, in the subdivision of the equilateral triangle mentioned above. Second, our double spiral configuration in the center of the triangle uses far fewer vertices and thus is more transparent. Third, the corresponding convex caps we construct have planar boundaries due to our use of Pogorelov's theorem mentioned above, whereas Tarasov applies instead a related result of Alexandrov for unbounded polyhedra which does not yield precise control at the boundary of the cap. Fourth, since our caps have planar boundaries, we require only $4$ copies of them to assemble our polyhedron, whereas Tarasov employs many more in a complex configuration.  

The edge unfolding problem for convex polyhedra was first explicitly formulated by Shephard \cite{shephard} in 1975, and since then has been advertised in several sources, e.g., \cite{pak:book,ziegler:book, cfg, do:book, orourke:book, ghomi:notices}. The conjecture that the answer  is yes, i.e., all convex polyhedra are unfoldable, appears to be first stated by Gr\"unbaum  \cite{grunbaum} in 1991. The earliest known examples of unfoldings of convex polyhedra were drawn by D\"{u}rer \cite{durer}  in 1525, all of which were nonoverlapping. Hence the unfolding problem or conjecture are often associated with his name.  For more background, references, and a positive recent result see \cite{ghomi:durer} where it is shown that every convex polyhedron becomes unfoldable after an affine transformation. See also O'Rourke \cite{orourke2016,orourke2017} for other recent positive results  concerning unfoldability of certain convex caps. As far as we know, Theorem \ref{thm:main}  is the first hard evidence against D\"{u}rer's conjecture.

\section{Convex Caps with Prescribed Boundary and Curvature}\label{sec:caps}
 A (polyhedral)  \emph{convex cap} $C\subset\R^3$ is a topological disk which lies on a convex polyhedron and whose boundary $\d C$ lies in a plane $H$, while its interior $C\setminus\d C$ is disjoint from $H$.
 The \emph{normal cone} $N_p(C)$ of $C$ at an interior point $p$ is the convex cone generated by all outward normal vectors to support planes of $C$ at $p$. The \emph{unit normal cone} $\ol N_p(C)$ is the collection of unit vectors in $N_p(C)$. The \emph{curvature} of $C$ at $p$ is defined as 
 $$
 \kappa(p)=\kappa_C(p):=\sigma(\ol N_p(C)),
 $$
  where $\sigma$ denotes the area measure in the unit sphere $\S^2$.      Let $\pi\colon\R^3\to\R^2$ denote the projection into the first two coordinates. A set $X\subset\R^3$ is a \emph{terrain} over $\R^2$ provided that $\pi$ is one-to-one on $X$, and $X\subset\R^2\times [0,\infty)$.
A \emph{convex polygon} $P$ is the convex hull of finitely many points in $\R^2$ which do not all lie on a line. We say that a convex cap $C$ is  \emph{over} $P$ provided that $C$ is a terrain over $\R^2$ and $\d C=\d P$.   We need the following result of Pogorelov \cite[Lem. 1, p. 65]{pogorelov:book}, see also Pak's lecture notes \cite[Thm 35.7]{pak:book}. 

\begin{lem}[Pogorelov \cite{pogorelov:book}]\label{lem:C}
Let $P$ be a convex polygon,  $p_i$, $i=1,\dots, n$,  be  points in the interior of $P$,  and $\beta_i>0$ with $\sum_{i}\beta_i<2\pi$. Then there exists a unique convex cap $C$ over $P$ with  interior vertices $v_i$ such that  $\pi(v_i)=p_i$,  and $\kappa(v_i)=\beta_i$.\qed
\end{lem}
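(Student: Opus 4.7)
The plan is to follow the classical continuity method of Alexandrov and Pogorelov. I would parametrize the space of convex caps over $P$ whose interior vertices project onto the prescribed points $p_1,\dots,p_n$ by their vertex heights. For $h = (h_1,\dots,h_n) \in \R^n_{>0}$, set $v_i(h) := (p_i,h_i)$ and let $C(h)$ denote the upper boundary of $\conv\bigl(\{v_i(h)\}_{i=1}^n \cup \d P\bigr)$, viewed as a terrain over $P$. Let $U \subset \R^n_{>0}$ be the open subset of heights for which each $v_i(h)$ is an actual vertex of $C(h)$, rather than lying in a face or edge spanned by the other $v_j$'s and $\d P$. The curvature assignment
\[
\Phi : U \longrightarrow \Delta, \qquad \Phi(h) := \bigl(\kappa_{C(h)}(v_1(h)),\dots,\kappa_{C(h)}(v_n(h))\bigr),
\]
lands in the open simplex $\Delta := \{\beta \in \R^n_{>0} : \sum_i \beta_i < 2\pi\}$, since $\sum_i \kappa_{C(h)}(v_i)$ equals the spherical area of the Gauss image of $C(h)$, which is strictly less than $2\pi$ by convexity. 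The lemma is equivalent to the assertion that $\Phi$ is a homeomorphism onto $\Delta$.

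My strategy proceeds in four steps. First, $\Phi$ is continuous, since the combinatorial type of $C(h)$ is locally constant on $U$ and each face angle at $v_i$ depends smoothly on the vertex positions. Second, $\Phi$ is a local homeomorphism: by a Cauchy-type infinitesimal rigidity argument, any nonzero vertical infinitesimal deformation of the vertices that preserves all the curvatures $\kappa(v_i)$ would produce a nontrivial isometric deformation of $C(h)$ with boundary fixed, contradicting the rigidity of convex polyhedra. Since $\dim U = \dim \Delta = n$, invariance of domain then upgrades local injectivity to local homeomorphism. Third, $\Phi$ is a proper map $U \to \Delta$: if $\Phi(h^{(k)}) \to \beta \in \Delta$ but $h^{(k)}$ fails to subconverge inside $U$, then one of several degenerations must occur, namely some $h_i^{(k)} \to 0$, some $h_i^{(k)} \to \infty$, or the heights stay bounded while $v_i(h^{(k)})$ slides into a face or edge spanned by the others. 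In each case one checks that a component of $\Phi(h^{(k)})$ tends to $0$, tends to $2\pi$, or forces $\sum_j \Phi(h^{(k)})_j \to 2\pi$, contradicting $\beta \in \Delta$. Fourth, a proper local homeomorphism between connected manifolds is a covering map; since $\Delta$ is convex and hence simply connected, $\Phi$ is a homeomorphism, yielding both existence and uniqueness.

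The main obstacle is the properness analysis in step three: one must exhaustively control the ways the upper convex hull of $\{v_i(h)\} \cup \d P$ can degenerate as $h$ approaches $\d U$ or escapes to infinity, and check that each such degeneration pushes $\Phi(h)$ out of $\Delta$. Step two is the other delicate ingredient, relying on infinitesimal rigidity of convex polytopes adapted to caps with fixed vertical projection lines and fixed planar boundary. Both facts are standard in the Alexandrov--Pogorelov machinery developed in Pogorelov's monograph, which is why the lemma is cited here rather than proven from scratch.
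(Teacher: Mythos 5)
The paper itself does not prove this lemma: it is quoted from Pogorelov (with Pak's notes as a secondary reference) and stamped with a \qed, so there is no in-paper argument to compare against; your sketch is, in outline, the classical Alexandrov--Pogorelov continuity method that those sources use (heights as parameters, curvature map into the open simplex, openness plus properness plus a degree/uniqueness argument). As an outline it is the right machinery, but two of your steps have genuine gaps. The serious one is step two: an infinitesimal vertical deformation $\dot h$ annihilated by $d\Phi$ preserves only the $n$ vertex curvatures $\kappa(v_i)$, i.e.\ the cone angles; it does \emph{not} preserve the induced metric of the cap (edge lengths and face shapes change), so it is not an infinitesimal isometric deformation, and Cauchy-type rigidity of convex polyhedra cannot be invoked. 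What is actually needed is injectivity (or nondegeneracy of the differential) of the heights-to-curvatures map itself, which in the literature is obtained either by a direct comparison/maximum-principle argument for two caps with equal curvatures (this is how uniqueness is classically proved) or variationally, by exhibiting the cap as the critical point of a concave functional of the heights whose gradient is the curvature vector. Note also that $\Phi$ is only piecewise smooth -- the face lattice of $C(h)$ changes inside $U$ even though all $v_i$ stay vertices, so your step-one justification (``combinatorial type locally constant on $U$'') is false as stated, and an infinitesimal argument would anyway only control the smooth strata.

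The second gap is topological: your step four needs $U$ to be connected. A proper local homeomorphism onto the simply connected set $\Delta$ restricts to a homeomorphism on each component of $U$, so without connectivity of $U$ you get existence but not uniqueness, and connectivity of $U$ is not obvious (it is a cone in $\R^n_{>0}$, but cones minus the apex need not be connected, and $U$ is not evidently closed under the interpolations one would like to use). The classical route sidesteps this by proving global injectivity of $\Phi$ first (the uniqueness half of the statement), after which openness of the image (invariance of domain) plus closedness in $\Delta$ (your properness analysis, which is plausible but is the bulk of the real work) gives surjectivity. So to make your plan rigorous, replace the rigidity appeal in step two by a genuine uniqueness or concavity argument and either prove $U$ connected or restructure step four around global injectivity.
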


A \emph{convex subdivision} of a convex polygon $P$ is a subdivision of $P$  into finitely many convex polygons each of whose vertices either lies in the  interior of $P$ or coincides with a vertex of $P$ (we assume that the interior angles of $P$ at all its vertices are less than $\pi$). If $G$ is the (edge) graph of a convex subdivision of $P$, then by an \emph{interior vertex} $p_i$ of $G$ we mean a vertex of $G$ which lies in the interior of $P$. We assume that the angles of incident edges of $G$ at $p_i$ are all less than $\pi$. We say that $G$ is \emph{weighted} if to each of its interior vertices  there is associated a number $\alpha_i>0$ with $\sum_i\alpha_i=1$. 
Let the \emph{total curvature} $\kappa(C)$ of a convex cap $C$ be the sum of the curvatures of its interior vertices.
Lemma \ref{lem:C} immediately yields:

\begin{cor}\label{cor:C}
Let $P$ be a  convex polygon, and $G$ be the weighted graph of a convex subdivision of $P$, with interior vertices $p_i$ and weights $\alpha_i$. Then for any $0<\beta< 2\pi$  there exists a convex cap $C_\beta$ over $P$ with interior vertices $v_i$ such that $\pi(v_i)=p_i$ and $\kappa(v_i)=\beta_i:=\alpha_i\beta$. In particular $\kappa(C_\beta)=\beta$.\qed
\end{cor}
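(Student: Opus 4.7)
The plan is to derive this corollary as a direct application of Lemma \ref{lem:C}. First I would set $\beta_i := \alpha_i\beta$ for each interior vertex $p_i$ of the weighted graph $G$. Since $\alpha_i>0$ and $\beta>0$, we have $\beta_i>0$. Moreover, using the normalization $\sum_i \alpha_i = 1$ built into the definition of a weighted graph, we get
\[
\sum_i \beta_i \;=\; \beta\sum_i \alpha_i \;=\; \beta \;<\; 2\pi,
\]
so the hypotheses of Pogorelov's lemma are satisfied for the points $p_i$ in the interior of $P$ with the prescribed curvatures $\beta_i$.

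Applying Lemma \ref{lem:C} then yields a (unique) convex cap $C_\beta$ over $P$ whose interior vertices $v_i$ satisfy $\pi(v_i)=p_i$ and $\kappa(v_i)=\beta_i$, which is precisely what the corollary asserts. For the final clause, I would simply observe that the total curvature of $C_\beta$ is by definition the sum of the curvatures at its interior vertices, so
\[
\kappa(C_\beta) \;=\; \sum_i \kappa(v_i) \;=\; \sum_i \beta_i \;=\; \beta.
\]

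There is no substantive obstacle here: the only thing to check is that the weights $\alpha_i$ and the target total curvature $\beta$ interact correctly, and this is immediate from $\sum_i\alpha_i=1$ together with the bound $\beta<2\pi$. The role of the corollary is to repackage Pogorelov's existence theorem in the form that will be used later: one fixes, once and for all, the combinatorial/planar data (the convex subdivision $G$ with weights) and then is free to tune the single scalar parameter $\beta$ to make the resulting cap as flat as desired, while the projection of its vertex set remains $\{p_i\}$.
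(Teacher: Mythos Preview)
Your argument is correct and is exactly the intended one: the paper itself marks the corollary with a \qed{} immediately after stating it, since it follows at once from Lemma~\ref{lem:C} by setting $\beta_i:=\alpha_i\beta$ and using $\sum_i\alpha_i=1$ together with $\beta<2\pi$.
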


\section{Pseudo-Edge Unfoldings of Almost Flat Convex Caps}\label{sec:pseudo}

In this section we fix  $P$, $G$,  and $\alpha_i$ to be as in Corollary \ref{cor:C}, and aim to study the corresponding convex caps $C_\beta$ for small $\beta$. In particular we will show that $G$ gives rise to a unique pseudo-edge graph $\ol G$ of $C_\beta$ (Proposition \ref{prop:E}) and study the corresponding unfoldings of $C_\beta$ in relation to $P$ (Proposition \ref{prop:converge}). 

\subsection{The induced pseudo-edge graph of $C_\beta$}
First we check that as $\beta\to 0$, $C_\beta\to P$. More precisely, if
$d_\beta$ denotes the intrinsic distance in $C_\beta$, then we have:

\begin{lem}\label{lem:BetaToZero}
As $\beta\to 0$, $d_\beta(x,y)\to|\pi(x)-\pi(y)|$, for all $x$, $y\in C_\beta$.
\end{lem}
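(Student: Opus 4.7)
The plan is to sandwich $d_\beta(x,y)$ between $|\pi(x)-\pi(y)|$ and $|\pi(x)-\pi(y)|+2h_\beta$, where $h_\beta:=\max_P f_\beta$ is the maximum height of $C_\beta$ viewed as the graph of a concave piecewise-linear function $f_\beta\colon P\to[0,\infty)$ vanishing on $\d P$. Once this sandwich is in place, the lemma reduces to showing $h_\beta\to 0$ as $\beta\to 0$.

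The lower bound is immediate: since $\pi$ is $1$-Lipschitz, any rectifiable path from $x$ to $y$ in $C_\beta$ projects to a path of no greater length joining $\pi(x)$ and $\pi(y)$ in $P$, and any such planar path has length at least $|\pi(x)-\pi(y)|$. Hence $d_\beta(x,y)\ge|\pi(x)-\pi(y)|$.

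For the upper bound, I will lift the Euclidean segment $\ell(t)=(1-t)\pi(x)+t\pi(y)$, $t\in[0,1]$ (contained in $P$ by convexity), to the polygonal path $\gamma(t)=(\ell(t),f_\beta(\ell(t)))$ on $C_\beta$, which is well-defined because $C_\beta$ is a terrain and which joins $x$ to $y$. Using $\sqrt{a^2+b^2}\le|a|+|b|$, $\length(\gamma)\le|\pi(x)-\pi(y)|+V$, where $V$ is the total variation of $t\mapsto f_\beta(\ell(t))$. This latter function is concave, piecewise-linear, and nonnegative on $[0,1]$, so its total variation is at most twice its maximum on $[0,1]$, giving $V\le 2h_\beta$.

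The main obstacle is proving $h_\beta\to 0$, and my plan here is a compactness argument driven by the uniqueness clause of Pogorelov's theorem (Lemma~\ref{lem:C}). Suppose, to the contrary, that some sequence $\beta_k\to 0$ has $h_{\beta_k}\ge\varepsilon>0$. Standard a priori height bounds for convex caps whose total curvature stays bounded away from $2\pi$ allow the Blaschke selection theorem to extract a Hausdorff-convergent subsequence $C_{\beta_k}\to C_\infty$. The limit $C_\infty$ is a convex surface over $P$ with boundary $\d P$, and by weak continuity of the curvature measure of convex surfaces under Hausdorff limits its total curvature equals $\lim\beta_k=0$; since a convex surface with zero total curvature and planar boundary must itself be flat, $C_\infty=P$, contradicting $h_{\beta_k}\ge\varepsilon$. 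The delicate part of this last step is passing to the limit in the curvature measure and verifying the claimed height bound, both of which are standard in Alexandrov's theory of convex polyhedra but would require careful referencing to the appropriate results in Pogorelov's book.
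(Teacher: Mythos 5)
Your reduction of the lemma to the height decay $h_\beta\to 0$ is correct and is essentially what the paper does: the paper also obtains the upper bound by lifting the planar segment joining $\pi(x)$ and $\pi(y)$ to the graph of a convex function over it, and the lower bound from $d_\beta(x,y)\ge|x-y|\ge|\pi(x)-\pi(y)|$; your quantitative total-variation bound $V\le 2h_\beta$ is a harmless refinement. The genuine difference is how you prove $h_\beta\to 0$. The paper does it with a short direct comparison: if $C_{\beta_k}$ had an interior vertex of height at least $h>0$ over some $p_i$, then $C_{\beta_k}$ lies above the cone $C'$ with apex at height $h$ over $p_i$ and base $\partial P$, so every support plane of $C'$ at its apex translates to a parallel support plane of $C_{\beta_k}$ at an interior vertex, whence $\kappa(C_{\beta_k})\ge\kappa(C')>0$, contradicting $\beta_k\to 0$. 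Your route via Blaschke selection and weak continuity of curvature measures can be made rigorous: passing to the solid bodies bounded by the caps together with the base $P$, Hausdorff convergence gives weak convergence of curvature measures, and since the open half-space $\{z>0\}$ sees exactly the interior vertices, the portmanteau inequality for open sets goes in the favorable direction, so no curvature can escape to the boundary in the limit; the uniform height bound needed for compactness also holds once $\beta_k\le\pi$, say. But observe what you have deferred: both that height bound and your final claim that a convex cap over $P$ with zero interior curvature must coincide with $P$ (for a limit surface that need not be polyhedral) are most naturally proved by precisely the cone comparison above. Once that comparison is available, the compactness and weak-continuity machinery is an unnecessary detour --- applied directly to $C_{\beta_k}$ it settles the height estimate in a few lines and keeps the argument self-contained, which is the paper's choice; your version trades this for reliance on citable but heavier results from Alexandrov--Pogorelov theory.
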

\begin{proof}
As $\beta\to 0$, the maximum height of $C_\beta$ goes to zero. If not,
there exists a sequence $\beta_k\to 0$ such that the maximum height of $C_k:=C_{\beta_k}$ is bounded below by $h>0$. So, after refining the subsequence $C_k$ further, we may assume that for some $i$, the height of the vertex $v^k_i$ of $C_k$ which projects onto $p_i$  is bounded below by $h$. Let $o$ be the point of height $h$ above $p_i$, and $C'$ be the convex cap formed by line segments connecting $o$ to points of $\partial P$. 
Since $C'$ lies below $C_k$, every support plane of $C'$ at an interior vertex is parallel to a support plane of $C_k$ at an interior vertex. So $\kappa(C_k)\geq \kappa(C')>0$, which is the desired contradiction since $\kappa(C_k)=\beta_k\to 0$. 

Now let $L$ be the line segment connecting $\pi(x)$, $\pi(y)$, and $\ol L$ be the corresponding curve in $C_\beta$ connecting $x$, $y$, such that $\pi(\ol L)=L$. Then $d_\beta(x,y)\leq\length(\ol L)$. But $\ol L$ is the graph of a convex function over $L$ which converges to $0$. Thus $\length(\ol L)\to\length(L)=|\pi(x)-\pi(y)|$. So the limit of $d_\beta(x,y)$ is not bigger than $|\pi(x)-\pi(y)|$. On the other hand $d_\beta(x,y)\geq |x-y|\geq |\pi(x)-\pi(y)|$. So $d_\beta(x,y)\to|\pi(x)-\pi(y)|$.
\end{proof}

A \emph{polyhedral disk} $D$ is a topological disk composed of a finite number of convex polygons identified along their edges. We say that $D$ is \emph{flat} if the total angle at each of its interior vertices is $2\pi$. An \emph{isometric immersion} $f\colon D\to\R^2$ is a locally one-to-one continuous map which preserves distances between points on each face of $D$. If  $f$ is one-to-one everywhere, then we say that it is an \emph{isometric embedding}.

\begin{lem}\label{lem:develop}
Let $D$ be a flat polyhedral disk. Suppose that the total angle at each of the boundary vertices of $D$ is less than $2\pi$. Then there exists an isometric immersion $D\to\R^2$.
\end{lem}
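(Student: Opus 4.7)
The plan is a standard developing-map argument, which uses the simple connectedness of $D$ together with the angle hypotheses to kill the monodromy and rule out local folding.

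First I would fix a ``root'' face $F_0$ of $D$ with some isometric embedding $f_0\colon F_0\to\R^2$. For any other face $F$ of $D$, I would choose a chain $F_0,F_1,\dots,F_k=F$ of faces in which consecutive ones share an edge, and extend $f_0$ inductively: given the developed image of $F_{i-1}$, the developed image of $F_i$ is the unique isometric copy of $F_i$ sharing the developed image of the common edge $F_{i-1}\cap F_i$ and lying on the opposite side of it. This produces a candidate map $f\colon D\to\R^2$ that is, by construction, an isometry on each face.

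Second I would check that $f$ is well defined, i.e.\ independent of the chain. Because $D$ is a topological disk, hence simply connected, any two chains from $F_0$ to a given $F$ are connected by a sequence of elementary moves, each of which goes around a single interior vertex $v$. So it suffices to verify that the composition of the gluing isometries around $v$ is the identity of $\R^2$. This composition is orientation-preserving and fixes the image of $v$, hence is a rotation; its angle equals the total angle at $v$, which is $2\pi$ by the flatness hypothesis. So the monodromy is trivial and $f$ is well defined.

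Third I would verify that $f$ is a local homeomorphism, which together with the per-face isometry gives the isometric immersion property. On interiors of faces this is automatic, and on interiors of edges it holds because the two adjacent faces develop on opposite sides of the shared edge. At an interior vertex $v$, the star of $v$ develops into a cyclic arrangement of convex polygons around $f(v)$; since each face is a convex polygon, each contributes an angle in $(0,\pi)$ at $v$ and occupies exactly the corresponding angular sector at $f(v)$. Because these angles sum to $2\pi$, the sectors tile a disk neighborhood of $f(v)$ without overlap, so $f$ restricts to a homeomorphism from the star of $v$ onto that disk. At a boundary vertex the analogous fan subtends a total angle strictly less than $2\pi$, hence embeds injectively into a half-disk neighborhood of $f(v)$.

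The main obstacle is the interior-vertex case of the local-injectivity step: a priori a star whose angles sum to $2\pi$ could still fold onto itself. What prevents this is precisely the convexity of the faces, which forces each angle at $v$ to be less than $\pi$ and each face to lie entirely inside its own angular sector at $f(v)$. Once this geometric observation is in hand, the monodromy computation and the local-homeomorphism check combine immediately to yield the required isometric immersion.
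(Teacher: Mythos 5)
Your argument is correct and is essentially the same as the paper's: the paper also notes that the angle conditions give local isometric embeddability near every point and then invokes simple connectedness to glue the local maps into a global immersion (referring to \cite[Lem.~2.2]{ghomi:verticesA} for details). Your face-by-face development along dual chains, with the explicit monodromy computation around interior vertices and the local-injectivity check at vertices, is just a concrete combinatorial rendering of that same continuation argument, so no essential difference or gap.
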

 \begin{proof}
 The angle condition along $\d D$ ensures that each point of $D$ has a neighborhood which may be isometrically embedded into $\R^2$. Since $D$ is simply connected, a family of these local maps may be joined to produce the desired global map, e.g., see the proof of \cite[Lem. 2.2]{ghomi:verticesA} for further details.
 \end{proof}

 By a  \emph{geodesic} in $C_\beta$ we mean the image of a continuous map $\gamma\colon[a,b]\to C_\beta$ such that $\length[\gamma]=d_\beta(\gamma(a),\gamma(b))$. For any set $X\subset\R^2$, and $r>0$, $U_r(X)$ denotes the (open) set of points in $\R^2$ which are within a distance $< r$ of $X$. Further we set
\begin{equation}\label{eq:delta}
 \delta:=\min \dist(p_ip_j, p_k)
 \end{equation}
where $p_i$, $p_j$ range over all pairs of adjacent vertices of $G$, so $p_ip_j$ indicates an edge of $G$ (viewed as a line segment in $\R^2$), and $p_k$ ranges over vertices different from $p_i$ and $p_j$. Hence $\delta>0$. By \emph{sufficiently small}, or simply \emph{small}, throughout this work we mean all nonzero values smaller than some positive constant. More explicitly, we say that some property holds for $\beta$ sufficiently small, provided that there exists a constant $\beta_0>0$ such that the property holds for all $0<\beta\leq\beta_0$.
 
\begin{lem}\label{lem:E}
 If $\beta$ is sufficiently small, then to each edge $e$ of $G$ there corresponds a unique  geodesic $\ol e$ of $C_\beta$ whose end points project into the endpoints of $e$, and $\pi(\ol{e})\subset U_\delta(e)$. 
 \end{lem}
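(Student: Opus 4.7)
The plan is to combine Lemma \ref{lem:BetaToZero} with the fact that the coordinate projection $\pi\colon\R^3\to\R^2$ is $1$-Lipschitz to force minimizing geodesics between the lifts $v_i,v_j\in C_\beta$ of $p_i,p_j$ to lie inside the $\delta$-tube around $e$. Uniqueness will then follow from a flat-digon argument based on Lemma \ref{lem:develop} together with the separation condition built into the definition \eqref{eq:delta} of $\delta$.

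Set $L:=|p_i-p_j|$. I first choose $\epsilon_0>0$ so small that every rectifiable curve in $\R^2$ from $p_i$ to $p_j$ of length less than $L+\epsilon_0$ lies in $U_\delta(e)$; such an $\epsilon_0$ exists because the continuous function $q\mapsto |p_i-q|+|q-p_j|$ attains a minimum strictly greater than $L$ on the closed set $\{q\in\R^2:\dist(q,e)\geq\delta\}$. Existence of a minimizing geodesic $\ol e$ between $v_i$ and $v_j$ in the compact intrinsic length space $C_\beta$ is standard, and by Lemma \ref{lem:BetaToZero}, $d_\beta(v_i,v_j)<L+\epsilon_0$ for all sufficiently small $\beta$. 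Since $\pi$ is $1$-Lipschitz,
$$\length(\pi(\ol e))\leq\length(\ol e)=d_\beta(v_i,v_j)<L+\epsilon_0,$$
so by the choice of $\epsilon_0$, $\pi(\ol e)\subset U_\delta(e)$.

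For uniqueness, I would argue by contradiction: suppose $\ol e_1\neq\ol e_2$ are two such geodesics. Passing to an innermost subdigon formed by subarcs of $\ol e_1$ and $\ol e_2$, I may assume that $\ol e_1\cup\ol e_2$ is a simple closed curve bounding an embedded closed disk $D\subset C_\beta$. Because $\pi\colon C_\beta\to P$ is a homeomorphism and $U_\delta(e)$ is simply connected, the whole of $\pi(D)$, not just its boundary, lies in $U_\delta(e)$. However, by \eqref{eq:delta} no vertex $p_k$ of $G$ with $k\neq i,j$ lies in $U_\delta(e)$, so the interior of $D$ contains no cone point of $C_\beta$. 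Thus $D$ is a flat polyhedral disk with boundary angles less than $2\pi$, and Lemma \ref{lem:develop} yields an isometric immersion $\phi\colon D\to\R^2$ sending each of the geodesic sides to a straight segment of common length $d_\beta(v_i,v_j)$ from $\phi(v_i)$ to $\phi(v_j)$. These two segments coincide in $\R^2$, so $\ol e_1$ and $\ol e_2$ share their initial direction at $v_i$, forcing them to coincide and contradicting distinctness.

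I expect the uniqueness step to be the main obstacle: one must lift the simple connectivity of $U_\delta(e)$ through the homeomorphism $\pi$ to confirm that the enclosed digon traps no cone points, and then invoke the flat-disk developability provided by Lemma \ref{lem:develop}. The existence part and the containment $\pi(\ol e)\subset U_\delta(e)$ are essentially direct consequences of Lemma \ref{lem:BetaToZero} and the $1$-Lipschitz property of $\pi$.
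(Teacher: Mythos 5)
Your proposal is correct and follows essentially the same route as the paper: the tube containment $\pi(\ol e)\subset U_\delta(e)$ comes from Lemma \ref{lem:BetaToZero} (plus the $1$-Lipschitz projection), and uniqueness is ruled out by developing a vertex-free digon bounded by subarcs of the two geodesics via Lemma \ref{lem:develop}, using the definition \eqref{eq:delta} of $\delta$ to exclude cone points. The only cosmetic difference is the final contradiction: the paper phrases it as failure of local injectivity of the immersion at the digon's corners, while you phrase it as the two sides developing onto the same segment and hence sharing an initial direction; both are valid forms of the same argument.
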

 \begin{proof}
 Let $x$, $y\in C_\beta$ be points which project into the end points of $e$, and $\Gamma$ be a geodesic in $C_\beta$ connecting $x$ and $y$. By Lemma \ref{lem:BetaToZero}, $\length(\Gamma)\to |\pi(x)-\pi(y)|$. Thus, for $0<\beta\leq\beta_0(e)$,  $\pi(\Gamma)\subset U_\delta(e)$. We claim that $\Gamma$ is unique. To this end suppose, towards a contradiction, that there exists another geodesic $\Gamma'$ in $C_\beta$ connecting $x$ and $y$, which is different from $\Gamma$. Then again we have $\pi(\Gamma')\subset U_\delta(e)$, since by definition our geodesics are length minimizing, and so $\length(\Gamma')=d_\beta(x,y)=\length(\Gamma)$. Let $V\subset C_\beta$ be the region with $\pi(V)=U_\delta(e)$. Then $\Gamma$, $\Gamma'\subset V$. Since $\Gamma\neq\Gamma'$ and $V$ is simply connected, there exists a simply connected domain $D\subset V$ bounded by a pair of subsegments $\Gamma_0$ and $\Gamma_0'$ of $\Gamma$ and $\Gamma'$ respectively. Note that, by our choice of $\delta$, see \eqref{eq:delta}, $V$ does not contain any vertices of $C_\beta$ other than $x$ and $y$. Thus $D$ does not contain any vertices in its interior. So $D$ admits an isometric immersion $f\colon D\to\R^2$ by Lemma \ref{lem:develop}.  But, since isometries preserve geodesics, $f$ maps $\Gamma_0$ and $\Gamma_0'$ to straight line segments (which have the same end points). Hence $f(\Gamma_0)=f(\Gamma_0')$. In particular $f$ is not locally injective at the points of $\d D$ where $\Gamma_0$ and $\Gamma_0'$ meet, which is the desired contradiction. So $\Gamma$ is indeed unique. Finally, setting $0<\beta\leq \min \beta_0(e)$, as $e$ ranges over all edges of $G$, completes the proof.
 \end{proof}

 A \emph{convex polygon} $X$ in $C_\beta$ is a region bounded by a simple closed curve composed of a finite number of  geodesics meeting at angles which are less than $\pi$ with respect to the interior of $X$. A \emph{convex subdivision} of $C_\beta$ is a subdivision into finitely many convex polygons whose interiors contain no vertices of $C_\beta$, and whose vertices are vertices of $C_\beta$. A \emph{pseudo-edge graph} of $C_\beta$ is the edge graph of a convex subdivision.  By Lemma \ref{lem:E}, for $\beta$ sufficiently small,  there exists a unique pseudo-edge graph $\ol G$ of $C_\beta$ such that $\pi(\ol G)\subset U_\delta(G)$. Let $G^T$ be the (canonical) triangulation of $G$ given by connecting the center of mass of each nontriangular face of $G$ to its vertices. Again by Lemma \ref{lem:E}, there exists a unique triangulation $\ol G^T$ of $\ol G$ such that $\pi(\ol G^T)\subset U_\delta(G^T)$ and the vertices of $\ol G^T$ project onto the vertices of $G^T$. For any triangle $\Delta$ of $G^T$, let $\ol \Delta$ be the  triangle of $\ol G^T$ whose vertices project onto the vertices of $\Delta$. We will refer to $\Delta$ simply as a \emph{triangle} of $G$, and call $\ol\Delta$ the corresponding triangle of $\ol G$. Note that, by Lemma \ref{lem:develop}, there exists an isometric embedding $u_\Delta\colon \ol\Delta\to\R^2$ for each triangle $\Delta$ of $G$.

\begin{prop}\label{prop:E}
For $\beta$ sufficiently small,  there exists a canonical homeomorphism $f\colon P\to C_\beta$ such that (i) $f$ is the identity on $\d P$, (ii) $f(G)=\ol G$, and (iii) $u_\Delta\circ f$ is an affine map on each triangle $\Delta$ of $G$. Furthermore, $f$ converges to the identity map on $P$ as $\beta\to 0$.
\end{prop}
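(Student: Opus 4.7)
The plan is to build $f$ one triangle at a time over the canonical triangulation $G^T$, and then to verify that the pieces glue consistently. For each triangle $\Delta$ of $G^T$ with vertices $p_1,p_2,p_3$, let $\ol\Delta$ be the corresponding triangle of $\ol G^T$ with vertices $v_1,v_2,v_3$ satisfying $\pi(v_i)=p_i$, and set $q_i:=u_\Delta(v_i)$. Let $A_\Delta\colon \Delta\to u_\Delta(\ol\Delta)$ denote the unique affine map of planar triangles with $A_\Delta(p_i)=q_i$, and define
$$
f|_\Delta\;:=\;u_\Delta^{-1}\circ A_\Delta.
$$
Since $u_\Delta$ is an isometric embedding of a flat triangle, $f|_\Delta$ is a homeomorphism $\Delta\to\ol\Delta$, and $u_\Delta\circ f$ is affine on $\Delta$ by construction, so (iii) is automatic.

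To show that these local pieces glue into a global continuous map, suppose $\Delta_1,\Delta_2\in G^T$ share an edge $e=p_ip_j$; then $\ol\Delta_1,\ol\Delta_2$ share the pseudo-edge $\ol e$ from $v_i$ to $v_j$, which is unique by Lemma \ref{lem:E}. The restriction $f|_{\Delta_k}|_e = u_{\Delta_k}^{-1}\circ A_{\Delta_k}|_e$ is, independent of $k$, the unique constant-speed parametrization of $\ol e$ by $e$ sending $p_i\mapsto v_i$ and $p_j\mapsto v_j$; hence the two restrictions coincide, and $f\colon P\to C_\beta$ is well defined and continuous. Since each $f|_\Delta$ is a homeomorphism and the triangles $\{\Delta\}$, $\{\ol\Delta\}$ tile $P$ and $C_\beta$ in combinatorially identical patterns, $f$ is bijective and therefore a homeomorphism. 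For (i), every boundary edge $e\subset \d P$ coincides with its pseudo-edge $\ol e$ (both lie in $H$ and bound $C_\beta$), so $A_\Delta$ restricts to the identity on $e$ and $f|_{\d P}=\mathrm{id}$. Condition (ii) is immediate since $f$ maps each edge of $G$ onto the corresponding pseudo-edge.

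For the convergence $f\to \mathrm{id}_P$ as $\beta\to 0$: by (the proof of) Lemma \ref{lem:BetaToZero}, the heights of $C_\beta$ vanish and $d_\beta(v_i,v_j)\to |p_i-p_j|$ for each edge of $G^T$. Consequently, each planar triangle $u_\Delta(\ol\Delta)$ converges to a congruent copy of $\Delta$, forcing $A_\Delta\to \mathrm{id}$ after a suitable alignment, while $u_\Delta^{-1}$ becomes arbitrarily close to $\pi^{-1}|_\Delta$ in sup-norm; piecing these limits together yields $f\to \mathrm{id}_P$ uniformly on $P$. The main technical hurdle is the edge-compatibility check, which rests squarely on the uniqueness of $\ol e$ from Lemma \ref{lem:E} --- without it, the two affine parametrizations of $\ol e$ from opposite sides of $e$ could disagree and $f$ would fail to be well defined.
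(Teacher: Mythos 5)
Your construction is exactly the paper's: the paper defines $f$ on each triangle $\Delta$ of $G^T$ by matching barycentric coordinates between $\Delta$ and $u_\Delta(\ol\Delta)$ and composing with $u_\Delta^{-1}$, which is precisely your $u_\Delta^{-1}\circ A_\Delta$, and it likewise concludes $f\to\mathrm{id}_P$ from $C_\beta\to P$. You merely spell out the edge-compatibility and boundary checks (via the uniqueness in Lemma \ref{lem:E}) that the paper leaves implicit, so the proposal is correct and follows essentially the same route.
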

\begin{proof}
 For any vertex $p$ of $G^T$ let $f(p):=\pi^{-1}(p)\cap C_\beta$.  We define a mapping $g_\Delta\colon\Delta\to\Delta':=u_\Delta(\Delta)$ as follows.
For any $x\in\Delta$, let $(x_1,x_2,x_3)$ be the barycentric coordinates of $x$ with respect to the vertices $v_1$, $v_2$, $v_3$ of $\Delta$. Let $g_\Delta(x)$ be the point of $\Delta'$ whose barycentric coordinates with respect to the vertices $v_i':=u_\Delta\circ\pi^{-1}(v_i)$ of $\Delta'$ are $(x_1,x_2,x_3)$. Finally  set $f(x):=u_\Delta^{-1}(g_\Delta(x))$, where $\Delta$ is a triangle of $G^T$ which contains $x$. Since $C_\beta$ converges to $P$, as $\beta\to 0$, it follows that $f$ converges to the identity map on $P$.
\end{proof}

\subsection{Cut forests and unfoldings of $C_\beta$}\label{subsec:cutforest}
A \emph{tree} is a connected graph without cycles. A subgraph $F$ of $G$ is called a \emph{cut forest} if 
(i) $F$ is a collection of disjoint trees which contain all the vertices of $G$ in the interior of $P$,
and (ii) each tree of $F$ contains exactly one vertex of $\d P$; see the middle diagram in Figure \ref{fig:forest}. 
\begin{figure}[h]
\centering
\begin{overpic}[height=1.4in]{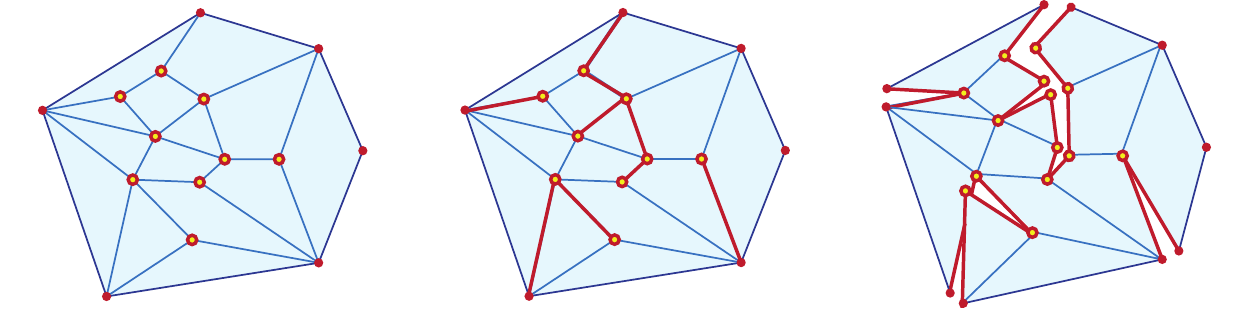}
\put(4.5,2.75){\Small$P$}
\put(17.5,14){\Small$G$}
\put(51.5,14.25){\Small$F$}
\put(69.5,3){\Small$C'_{\beta,F}$}
\end{overpic}
\caption{}\label{fig:forest}
\end{figure}
By Lemma \ref{lem:E}, to each cut forest $F$ of $G$ there corresponds a unique cut forest $\ol F$ of $\ol G$, assuming $\beta$ is small. Let $C_{\beta,F}$ be the surface obtained from $C_\beta$ by \emph{cutting} it along $F$, i.e., take the disjoint collection of the faces of $\ol G$ and glue them together pairwise along all their common edges which do not belong to $F$. 

By Lemma \ref{lem:develop} there exists an isometric immersion, or  \emph{unfolding map} 
$
u\colon C_{\beta,F}\to\R^2.
$
 We assume that $u$ fixes a designated edge $e_0$ of  $\d C_\beta$, and locally maps $C_{\beta,F}$ to the same side of $e_0$ where $P$ lies.   Let
  $
 \textup{cov}\colon C_{\beta,F}\to C_\beta
 $ 
 be the natural covering map which sends each face of $C_{\beta,F}$ to the corresponding face of $C_\beta$, 
and 
$
f\colon P\to C_\beta
$ 
be the homeomorphism given by Proposition \ref{prop:E}. Then
\begin{equation}\label{eq:phi}
\psi:=u\circ \textup{cov}^{-1}\circ f,
\end{equation}
is a multivalued mapping
$
P\to C'_{\beta,F}:=u(C_{\beta,F}).
$
Since $f$ converges to the identity map on $P$, it follows that $\psi$ also converges to the identity  on $P$ as $\beta\to 0$. 
Note that $\psi$ is single valued on $P\setminus F$ and is one-to-one on the interior of each face $\Phi$ of $G$. But $\psi$ is doubly valued for points in the interior of each edge of $F$ (where a cut occurs). Furthermore, if $x$ is a vertex of $F$, then the cardinality of $\psi(x)$ is equal to the degree of $x$ in $F$ (see Figure \ref{fig:forest}).
For any point $x\in P$, we set 
$$
x':=\psi(x).
$$
 So $x'$ in general indicates a set of points, not a single point. Whenever we state that $x'$ satisfies some property, we mean that \emph{each element} of $x'$ satisfies that property.

Note that $\psi$ induces a natural single-valued map $\psi_\Phi$ on each face $\Phi$ of $G$ as follows.
Let $\Phi'$ indicate the face of $C'_{\beta, F}$ corresponding to $\Phi$,
i.e., the closure of $\psi(\inte(\Phi))$. Then we obtain  a homeomorphism $\psi_\Phi$ between $\Phi$ and $\Phi'$, by setting $\psi_\Phi:=\psi$ on $\inte(\Phi)$ and extending the map continuously to the boundary of $\Phi$. For any $x\in \Phi$, let 
$$
x'_\Phi:=\psi_\Phi(x)
$$
 denote the corresponding (single) point of $\Phi'$. Since $\psi$ converges to the identity on $P$, it follows that, as $\beta\to 0$, $x'_\Phi\to x$ for all faces $\Phi$ of $G$ which contain $x$. In other words, $\psi_\Phi$ converges pointwise to the identity map $\textup{id}_\Phi$ on $\Phi$. Thus, as $\psi_\Phi$ is continuous and $\Phi$ is compact, we conclude that
 \begin{equation}\label{eq:phiPhi}
 \psi_\Phi\to \textup{id}_\Phi\quad\text{uniformly as}\quad \beta\to 0
\end{equation}
for all faces $\Phi$ of $G$. Next note that for every point $x$ in the interior of a triangle $\Delta$ of $\Phi$ we have
$$
\psi_\Phi(x)=\psi(x)=u\circ\textup{cov}^{-1}\circ f(x)=u\circ f(x)=u_\Delta\circ f(x).
$$
Also recall that, by Proposition \ref{prop:E}, $u_\Delta\circ f$ is an affine map.  Thus
 \begin{equation}\label{eq:phiPhi2}
\psi_\Phi \; \text{is an affine map on each triangle $\Delta\subset\Phi$}.
\end{equation}
These properties of $\psi_\Phi$ yield that:

\begin{prop}\label{prop:converge}
For every $\epsilon>0$, there exists $\beta_0(\epsilon)>0$ such that for all $0<\beta\leq \beta_0(\epsilon)$, $x\in P$, and faces $\Phi$ of $G$ which contain $x$,
 \begin{equation}\label{eq:xx'}
|x-x'_\Phi|\leq\epsilon.
 \end{equation} 
Furthermore, for any pair of points $x$, $y$ which both lie in the same triangle $\Delta\subset\Phi$, 
 \begin{equation}\label{eq:x-y}
|(x-y)-(x'_\Phi-y'_\Phi)|\leq\epsilon|x-y|.
 \end{equation} 
\end{prop}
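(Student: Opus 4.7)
\smallskip
\noindent\textbf{Proof proposal.} The plan is to deduce both inequalities directly from the two properties of $\psi_\Phi$ recorded in \eqref{eq:phiPhi} and \eqref{eq:phiPhi2}, using the fact that the graph $G$ has only finitely many faces and triangles, so that uniform constants exist.

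First, inequality \eqref{eq:xx'} is essentially a restatement of \eqref{eq:phiPhi}. For each face $\Phi$ of $G$, uniform convergence $\psi_\Phi\to \textup{id}_\Phi$ on $\Phi$ yields $\beta_0(\Phi,\epsilon)>0$ such that $|x-x'_\Phi|\leq\epsilon$ for all $x\in\Phi$ whenever $0<\beta\leq\beta_0(\Phi,\epsilon)$. Taking $\beta_0(\epsilon):=\min_\Phi\beta_0(\Phi,\epsilon)$ over the finitely many faces of $G$ gives the desired uniform bound.

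For \eqref{eq:x-y}, fix a triangle $\Delta\subset\Phi$ with vertices $v_1,v_2,v_3$, and let $v_i':=\psi_\Phi(v_i)$. By \eqref{eq:phiPhi2}, $\psi_\Phi$ restricted to $\Delta$ is the unique affine map sending $v_i$ to $v_i'$; write its linear part as $A_\Delta$, so that $A_\Delta(v_j-v_1)=v'_j-v'_1$ for $j=2,3$. Then for any $x,y\in\Delta$,
\[
(x-y)-(x'_\Phi-y'_\Phi)=(I-A_\Delta)(x-y),
\]
so it suffices to show that the operator norm $\|I-A_\Delta\|$ can be made as small as we wish by choosing $\beta$ small. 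Letting $w_j:=v_j-v_1$ and $w'_j:=v'_j-v'_1$, one has $\|w'_j-w_j\|\leq 2\epsilon'$ whenever $|v_i-v_i'|\leq\epsilon'$ on the vertices of $\Delta$, which holds for all $\beta$ small by part (i). Since the basis $\{w_2,w_3\}$ is fixed (depending only on the triangle $\Delta$ of $G$, not on $\beta$), a standard computation expresses $A_\Delta-I$ as a linear function of the vectors $w'_j-w_j$ with coefficients bounded by $\|(w_2,w_3)^{-1}\|$. Thus $\|A_\Delta-I\|\leq c_\Delta\,\epsilon'$ for a constant $c_\Delta$ depending only on the shape of $\Delta$.

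Because $G$ has only finitely many triangles, $c:=\max_\Delta c_\Delta<\infty$. Given $\epsilon>0$, we first apply part (i) with $\epsilon':=\epsilon/c$ to obtain $\beta_0(\epsilon)$, and conclude that $\|I-A_\Delta\|\leq \epsilon$ uniformly over all triangles $\Delta$ of $G$, which yields \eqref{eq:x-y}. The main (and only) subtlety will be verifying that the passage from vertex-wise closeness to operator-norm closeness of the affine map is uniform; this is handled by the finiteness of the combinatorial data of $G$, which decouples geometric shape constants from the parameter $\beta$.
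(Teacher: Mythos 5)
Your proof is correct and follows essentially the same route as the paper: both parts rest on \eqref{eq:phiPhi} and \eqref{eq:phiPhi2}, with \eqref{eq:x-y} reduced to showing that the linear part of the affine map $\psi_\Phi$ on each triangle $\Delta$ is close to the identity, uniformly over the finitely many triangles of $G$. Your explicit vertex-basis computation merely fills in the step the paper states as ``$\ell$ must converge to the identity map,'' so there is no substantive difference in approach.
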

\begin{proof}
The  inequality \eqref{eq:xx'} follows immediately from \eqref{eq:phiPhi}. 
To see \eqref{eq:x-y}, note that by  \eqref{eq:phiPhi2} $\psi_\Phi$ may be extended to an affine mapping from $\R^2$ to $\R^2$, and thus  be written as $\ell+c$ for a linear transformation $\ell\colon\R^2\to \R^2$ and some fixed vector $c\in\R^2$. Thus
$$
x'_\Phi-y'_\Phi=\psi_\Phi(x)-\psi_\Phi(y)=\ell(x-y).
$$
If $x=y$, \eqref{eq:x-y} already holds. Otherwise, we may set $z:=(x-y)/|x-y|$ and divide the left hand side  of \eqref{eq:x-y} by $|x-y|$ to obtain
\begin{eqnarray*}
\frac{|(x-y)-(x'_\Phi-y'_\Phi)|}{|x-y|}=\left|\frac{x-y}{|x-y|}-\ell\left(\frac{x-y}{|x-y|}\right)\right|=|z-\ell(z)|.
\end{eqnarray*}
Finally note that since $\psi_\Phi\to\textup{id}_\Phi$, $\ell$ must converge to the identity map on any compact subset of $\R^2$. In particular, we may choose $\beta_0(\epsilon)$ so small that $|z-\ell(z)|\leq\epsilon$ for all unit vectors $z\in\S^1$, which completes the proof.
\end{proof}

\section{Tarasov's Monotonicity Condition}\label{sec:cut}
As in the last section, let $P$ be a convex polygon, $G$ be the graph of a fixed convex subdivision of $P$ with weights $\alpha_i$, and $C_\beta$ be the corresponding convex cap over $P$ given by Corollary \ref{cor:C}. 
Here we describe Tarasov's obstruction for unfoldability of $C_\beta$ with respect to the induced pseudo-edge graph $\ol G$ given by Proposition \ref{prop:E}.

By an \emph{edge} of $G$ we mean the line segment connecting a pair of adjacent vertices of $G$, and a \emph{point} of $G$ is any point of an edge of $G$. We say a pair of  points are adjacent if they belong to the same edge. A \emph{path} $\Gamma$ in $G$ is a sequence of points, each adjacent to the next, all of which are vertices except possibly  the initial point. We say that $\Gamma$ is \emph{simple} if the sequence of line segments determined by its consecutive points forms a non-self-intersecting curve.  If $F$ is a cut forest of $G$, then
each point $p$ of $F$  may be joined to $\d P$ with a unique simple path $\Gamma_p$ in $F$, which we call the \emph{ancestral path} of $p$. This induces a  partial ordering on points of $F$ as follows: we write $y\succeq x$, for $x$, $y\in F$ and say that  $y$ is a \emph{descendant} of $x$ or $x$ is an \emph{ancestor} of $y$,  if $x\in\Gamma_y$. In particular note that $x\succeq x$. If $y\succeq x$ and $x\neq y$ then we  say $y$ is a \emph{strict descendant} of $x$, or $x$ is a strict ancestor of $y$, and write $x\succ y$. Furthermore, we adopt the following convention: for any $x\in F$, we write $i\succeq x$ provided that $p_i\succeq x$, where  $p_i$ denote the vertices of $G$.

For any point $x\in G$, we define the \emph{center of rotation} of  $x$ as the center of mass of its descendant vertices with respect to the weights $\alpha_i$:
  $$
 c_x:=\alpha_x^{-1}\sum_{i\succeq x}\alpha_ip_i, \quad\text{where}\quad \alpha_x:=\sum_{i\succeq x}\alpha_i.
 $$
 Roughly speaking, $c_x$ is the limit, as $\beta\to 0$, of the pivot point about which $x$ rotates (in different directions) to generate (the elements of) $x'$ defined in the last section (see Note \ref{note:pivot}).

Every interior vertex $p_i$ of $G$ has a unique adjacent vertex $p_i^*$ in $F$ which is its \emph{parent} or first strict ancestor which is a vertex. We also refer to $p_i$ as a \emph{child} of $p_i^*$. A  cut forest $F$ of $G$ is called \emph{monotone} (in the sense of Tarasov),  if for every interior vertex $p_i$ of $G$ we have
\begin{equation}\label{eq:cxe}
\l p_i^*-p_i,p_i-c_{i}\r\geq 0,\quad\text{where}\quad c_i:=c_{p_i}.
\end{equation}
In other words, $p_i^*$ must lie in the set $\l x-p_i,p_i-c_{i}\r\geq 0$, which forms a half-plane when $c_i\neq p_i$;
see figure \ref{fig:monotone}. 
\begin{figure}[h]
\centering
\begin{overpic}[height=1in]{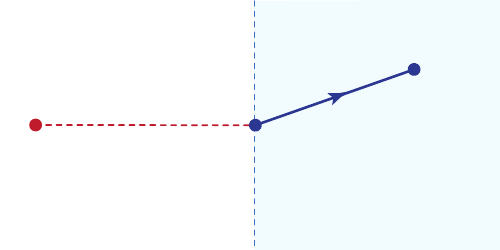}
\put(50,20){\small $p_i$}
\put(85,36){\small $p_i^*$}
\put(0,23){\small $c_i$}
\end{overpic}
\caption{}\label{fig:monotone}
\end{figure}
So, if $c_i\neq p_i$ and $0\leq\angle p_i^*p_i c_i\leq\pi$ denotes the angle between the vectors $p_i^*-p_i$ and $c_i-p_i$, then we have
\begin{equation}\label{eq:cxe2}
\angle p_i^*p_i c_i \geq\pi/2.
\end{equation}
In particular, every parent must be further away from the center of rotation of its child than the child is: 
\begin{equation}\label{eq:cxe3}
|p_i^*-c_i|>|p_i-c_i|.
\end{equation}
 Hence the term ``monotone". Note that \eqref{eq:cxe2} is equivalent to \eqref{eq:cxe} whenever $c_i\neq p_i$; however, \eqref{eq:cxe3} is a strictly weaker notion. Below we will primarily use the form \eqref{eq:cxe2} of the monotonicity condition. Some other notions of monotonicity have been studied recently by O'Rourke \cite{orourke2016,orourke2017}, and Lubiw and O'Rourke \cite{lubiw-orourke2017} for cut forests of  convex polyhedral disks. One of these notions, called radial monotonicity, will be invoked  in 
 Section \ref{subsec:properties} below as it is somewhat related to \eqref{eq:cxe2}. See \cite{ghomi:durer} for yet another monotonicity notion in the context of unfoldings.
 
 Recall that $C'_{\beta, F}$ is  the image of the unfolding map $u\colon C_{\beta, F}\to\R^2$. We say that $C'_{\beta, F}$ is  \emph{simple},  if and only if $u$ is injective. If $C'_{\beta, F}$ is simple for some cut forest $F$ of $G$, we say that $C_\beta$ is \emph{unfoldable} with respect to $\ol G$. If $G$ admits no monotone cut forests, then we say that $G$ is \emph{non-monotone}. The rest of this section is devoted to establishing the following result which parallels \cite[Thm. 1]{tarasov}. Recall that by sufficiently small throughout the paper,  as we stated in Section \ref{sec:pseudo}, we mean for all values smaller than some constant.

\begin{thm}\label{thm:F}
If $G$ is non-monotone, then $C_\beta$ is not unfoldable with respect to $\ol G$, for $\beta$ sufficiently small. 
 \end{thm}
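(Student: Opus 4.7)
The plan is to argue by contradiction. Suppose $G$ is non-monotone yet $C_\beta$ is unfoldable for arbitrarily small $\beta$. Since $G$ has only finitely many cut forests, by passing to a subsequence we obtain $\beta_k\to 0^+$ and a single cut forest $F$ of $G$ such that each unfolding $u_k\colon C_{\beta_k, F}\to\R^2$ is injective. By non-monotonicity of $F$, fix an interior vertex $p_i$ of $G$ with parent $p_i^*\in F$ satisfying $\langle p_i^*-p_i,\, p_i-c_i\rangle < 0$; the goal is to show that injectivity of $u_k$ forces the reverse inequality to first order in $\beta_k$, yielding a contradiction.

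The core of the argument is a first-order analysis of the unfolding near the cut edge $e=p_i^*p_i$. Let $\Phi_1,\Phi_2$ be the two faces of $G$ adjacent to $e$. By \eqref{eq:phiPhi2} and \eqref{eq:phiPhi}, the affine maps $\psi_{k,\Phi_1},\psi_{k,\Phi_2}$ converge uniformly to the identity. Extending each to an affine map of $\R^2$, the composition $\psi_{k,\Phi_1}^{-1}\circ\psi_{k,\Phi_2}$ is a rigid motion whose rotational part, by a Gauss--Bonnet calculation on $C_{\beta_k}$, has angle equal to the curvature enclosed by a loop from $\Phi_1$ to $\Phi_2$ that avoids the cuts. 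This loop encircles the descendant subtree $T_i := \{p_j : p_j\succeq p_i\}$, so the enclosed curvature is $\sum_{j\succeq i}\beta_k\alpha_j=\beta_k\alpha_{p_i}$, giving rotation angle $\beta_k\alpha_{p_i}+O(\beta_k^2)$.

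The next step is to locate the pivot of this infinitesimal rotation. A balance-of-mass calculation, tracking the first-order displacements of the descendant vertices $p_j\in T_i$ (each of whose multiple images in the unfolding must be consistent with a common rigid rotation in the limit), identifies the pivot with the $\alpha$-weighted centroid $c_i$, realizing the description of $c_i$ given in Note~\ref{note:pivot}. Under this identification,
\[
\psi_{k,\Phi_2}(x)-\psi_{k,\Phi_1}(x)=\beta_k\alpha_{p_i}\,J(x-c_i)+o(\beta_k),\qquad x\in e,
\]
where $J$ denotes rotation by $\pi/2$. Injectivity of $u_k$ on a neighborhood of $e$ requires the two segments $\psi_{k,\Phi_1}(e)$ and $\psi_{k,\Phi_2}(e)$ to bound a wedge whose interior lies outside $u_k(C_{\beta_k,F})$; analyzing the cut's local structure at $p_i^*$ shows that the wedge opens in a direction compatible with the rest of the unfolded cap only when $J(p_i^*-c_i)$ has the correct sign of inner product with $p_i^*-p_i$, i.e. $\langle p_i^*-p_i,\, p_i-c_i\rangle\geq 0$ up to $O(\beta_k)$. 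This contradicts the strict reverse inequality for $\beta_k$ sufficiently small.

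The main technical hurdle is the identification of the pivot with $c_i$: while the jump angle is a clean Gauss--Bonnet count, showing that the pivot of the limiting infinitesimal rotation is the $\alpha$-weighted centroid of the descendants requires carefully assembling the first-order contributions at each $p_j\in T_i$ into a single rigid motion, and checking that the weights dictated by Corollary~\ref{cor:C} are exactly those that appear in the definition of $c_i$. A related subtlety is the case analysis at $p_i^*$ (boundary vertex, branching point, or pass-through in $F$) required to convert local non-overlap of the cut into precisely the monotonicity form \eqref{eq:cxe}; I expect this to be the main step where one must reason directly from Proposition~\ref{prop:converge} rather than from the limiting picture alone.
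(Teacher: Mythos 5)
Your overall skeleton matches the paper's: reduce to a single cut forest (there are finitely many), fix a vertex $p_i$ violating \eqref{eq:cxe}, and show that across the cut edge $p_ip_i^*$ the two sides of the unfolding differ, to first order, by a rotation of angle $\beta\alpha_{p_i}$ about the weighted centroid $c_i$, which is incompatible with injectivity. But as written there are two genuine gaps. First, the displacement formula $\psi_{k,\Phi_2}(x)-\psi_{k,\Phi_1}(x)=\beta_k\alpha_{p_i}J(x-c_i)+o(\beta_k)$ --- i.e.\ the identification of both the angle and the pivot --- is precisely the content of the paper's Lemma \ref{lem:T1} (cf.\ \eqref{eq:tildec} and Note \ref{note:pivot}), and you leave it unproven; it is the technical heart of the theorem, proved there by enclosing the descendant subtree in a Jordan curve, cutting along a spanning tree of the enclosed vertices, and writing $x'_R-x'_L$ as a telescoping sum of contributions $\beta_j J(x-\tilde p_j)$ coming from rotations about the unfolded vertices $p_j''$. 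Moreover your Gauss--Bonnet step is mis-stated: $\psi_{k,\Phi_1}^{-1}\circ\psi_{k,\Phi_2}$ is not a rigid motion, because the face maps $\psi_{k,\Phi}$ are built from the barycentric homeomorphism $f$ of Proposition \ref{prop:E} and are not isometries; the exact rotation relation holds between the unfolding maps of the two cap faces, and one must also justify that every error is $o(\beta_k)$ rather than $o(1)$, since all quantities being compared are themselves $O(\beta_k)$ --- this is exactly why the relative estimate \eqref{eq:x-y} of Proposition \ref{prop:converge} is needed and uniform convergence alone does not suffice.

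Second, the passage from the displacement formula to the contradiction is not correct as described. Writing $x=p_i+t(p_i^*-p_i)$ for a point of the edge, one has $\langle x-c_i,\,p_i^*-p_i\rangle=\langle p_i-c_i,\,p_i^*-p_i\rangle+t\,|p_i^*-p_i|^2$, so the binding constraint sits at the \emph{child} end $p_i$, not at $p_i^*$; and your proposed criterion involving the sign of the inner product of $J(p_i^*-c_i)$ with $p_i^*-p_i$ is not the monotonicity condition \eqref{eq:cxe}. Also, the two unfolded copies of the cut edge need not bound a wedge with apex at an image of $p_i^*$: the cut continues through $p_i^*$ along its ancestral path, so $\Phi_R$ and $\Phi_L$ are not glued to each other around $p_i^*$ in $C_{\beta_k,F}$, and ``local non-overlap at $p_i^*$'' does not directly translate into \eqref{eq:cxe}. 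The correct deduction, as in the paper, is to choose $x$ on the edge with $|x-p_i|\le\lambda/2$ where $\langle p_i^*-p_i,p_i-c_i\rangle\le-\lambda|p_i^*-p_i|$, set $y:=x+\beta_x J(x-c_x)$ and $r:=\beta_x\lambda/2$, and verify (i) $|y'-x'_R|<r$ using Lemma \ref{lem:T1} together with \eqref{eq:x-y}, and (ii) the disk of radius $r$ about $y$ lies in $\Phi_L$ at distance at least $r$ from the line through $p_i,p_i^*$; this exhibits $x'_R\in\inte(\Phi_L')$, i.e.\ an actual overlap, which is the step your sketch only gestures at. So the strategy is the right one, but both the central lemma and the final geometric step still need proofs.
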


We prove the above theorem via the same  general approach indicated in \cite{tarasov}, although we correct a number of errors or ambiguities, provide more details, and make many simplifications. 
Fix a cut forest $F$ of $G$.
 If $x\in F$ is  not a vertex,  $x'$ consists of precisely two elements: $x'_R$ and $x'_L$ defined as follows. Orient the edge $e$ of $F$ containing $x$ from the child to the parent vertex. Then we can distinguish the faces $\Phi_R$, $\Phi_L$ of $G$ which lie to the right and left of $e$ respectively. We set 
 $$
 x'_R:=x'_{\Phi_R}, \quad\quad\text{and}\quad\quad x'_L:=x'_{\Phi_L}.
 $$
 Let  $J$ be the $\pi/2$-clockwise rotation about the origin of $\R^2$, and set
\begin{equation}\label{eq:tildec}
\tilde c_x:=x+\frac{J(x'_R-x'_L)}{\beta_x},\quad\quad\text{where}\quad\quad \beta_x:=\alpha_x\beta=\sum_{i\succeq x}\beta_i.
\end{equation}
The next observation parallels \cite[Lem. 1]{tarasov}.
 
\begin{lem}\label{lem:T1}
For every $x\in F$,
$
\tilde{c}_x\to c_x,
$
as $\beta\to 0$.
\end{lem}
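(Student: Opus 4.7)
The plan is to identify $\tilde c_x$ as a perturbation of the center of the rigid motion relating the two sides of the cut through $x$, and to locate that center via a first-order composition-of-rotations argument. First I would take $V$ to be a small open disk about $x$ in $C_\beta$ containing no vertex, so that the edge of $\ol F$ through $x$ splits $V$ into two half-disks $V_L$, $V_R$ lying in the faces of $C_\beta$ corresponding to $\Phi_L$ and $\Phi_R$; on these halves, $\psi_{\Phi_L}$ and $\psi_{\Phi_R}$ are isometric embeddings into $\R^2$, so $T_\beta:=\psi_{\Phi_R}\circ\psi_{\Phi_L}^{-1}$ extends uniquely to an orientation-preserving rigid motion of $\R^2$ with $T_\beta(x'_L)=x'_R$. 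To compute the rotation angle, I would apply Gauss--Bonnet to any loop in $C_\beta$ that crosses from $V_L$ to $V_R$ by encircling the subtree of $F$ spanned by the descendant vertices $\{p_i:i\succeq x\}$: the enclosed curvature is $\beta_x=\sum_{i\succeq x}\beta_i$, so $T_\beta$ is the clockwise rotation of $\R^2$ by angle $\beta_x$ (the sign matching the convention that $\Phi_R$, $\Phi_L$ sit to the right and left of $e$ oriented child-to-parent).

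The key step is locating the center $c_\beta$ of $T_\beta$ in the limit $\beta\to 0$. For this I would factor $T_\beta$ as an ordered product of local holonomies $T_\beta=S_{i_k}\circ\cdots\circ S_{i_1}$, in which each $S_{i_j}$ arises by encircling only the single vertex $p_{i_j}$ (conjugated by the translations needed to unfold the intermediate faces); each $S_{i_j}$ is then a clockwise rotation by $\beta_{i_j}=\alpha_{i_j}\beta$ about a point which, by Proposition \ref{prop:converge}, converges to $p_{i_j}$ as $\beta\to 0$. The standard first-order identity for compositions of small rotations,
\begin{equation*}
R_{\theta_k}^{q_k}\circ\cdots\circ R_{\theta_1}^{q_1}=R_\Theta^{\,q},\qquad q=\frac{1}{\Theta}\sum_j\theta_j q_j+O(\Theta),\qquad \Theta:=\sum_j\theta_j,
\end{equation*}
then gives $c_\beta\to \alpha_x^{-1}\sum_{i\succeq x}\alpha_i p_i=c_x$ as $\beta\to 0$.

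To close the argument, I would linearize $T_\beta$ directly: since $R_{\beta_x}=I+\beta_x J+O(\beta_x^2)$ for the clockwise rotation, $T_\beta(x'_L)=x'_R$ yields
\begin{equation*}
x'_R-x'_L=\beta_x J(x'_L-c_\beta)+O(\beta_x^2),
\end{equation*}
which rearranges, using $J^{-1}=-J$ and $|x'_L-c_\beta|=O(1)$, to $c_\beta=x'_L+J(x'_R-x'_L)/\beta_x+O(\beta)$. Sending $\beta\to 0$ and using $x'_L\to x$ then delivers $\tilde c_x=x+J(x'_R-x'_L)/\beta_x\to c_x$, as claimed. The main difficulty lies in the factorization step: precisely defining each $S_{i_j}$ and controlling the error in the composition formula uniformly in $\beta$; induction on the number of descendant vertices, peeling off one leaf of the subtree at a time and combining two rotations via the formula above with error bounds supplied by Proposition \ref{prop:converge}, seems the cleanest route.
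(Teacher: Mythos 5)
Your proposal is correct in outline, but it follows a genuinely different bookkeeping from the paper's proof; in fact it is essentially the ``composition of rotations'' viewpoint that the paper relegates to Note \ref{note:pivot} and attributes to \cite[Lem.\ 1]{orourke2017b}. The paper never needs to locate the center of the composite motion $T_\beta$: it encloses the descendant subtree $F_x$ by a curve $\Gamma$ meeting $F$ only at $x$, cuts the enclosed region along an auxiliary tree $S=\cup_i S_i$ of disjoint paths joining $x$ to each descendant vertex $p_i$, unfolds by Lemma \ref{lem:develop}, and checks that the image of $\Gamma$ is congruent to $\psi(\Gamma)$; then the chord telescopes additively, $x'_R-x'_L=\sum_{i\succeq x}(x''_{i,R}-x''_{i,L})$, and each summand is pinned to the single image $p_i''$ of $p_i$ by the exact identity $p_i''=x''_{i,M}+J(x''_{i,R}-x''_{i,L})/(2\tan(\beta_i/2))$, so that $\tilde c_x$ becomes an $\alpha_i$-weighted average of points $\tilde p_i\to p_i$. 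Thus all asymptotics reduce to $p_i''\to p_i$ and $2\tan(\beta_i/2)/\beta_i\to 1$, with no first-order formula for a product of rotations and no error propagation through the composition. Your route buys the cleaner conceptual statement that $\tilde c_x$ agrees with the actual pivot $c_\beta$ up to $O(\beta)$ (the heuristic behind the term ``center of rotation''), but it concentrates the work in the factorization step you yourself flag; there you should tighten two points: (i) $\psi_{\Phi_R}\circ\psi_{\Phi_L}^{-1}$ as written is only defined on the image of the cut edge, and $\psi_\Phi$ is merely piecewise affine rather than isometric, so what you really want is that the two branches of $u\circ\textup{cov}^{-1}$ along the cut at $x$ differ by an orientation-preserving isometry, which is then determined by its restriction to that edge; and (ii) Proposition \ref{prop:converge} is stated for the unfolding attached to the cut forest $F$, so to conclude that the centers of your local holonomies converge to the $p_i$ you must rerun its argument for the auxiliary unfolding, exactly as the paper does for its map $\theta$. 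With those repairs, your leaf-peeling induction is a viable substitute for the paper's auxiliary tree $S$, which achieves the same factorization in one step.
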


\begin{proof}
Let $F_x:=\{y\in F\mid y\succeq x\}$,
and $\Gamma$ be a polygonal Jordan curve in $P$  which encloses $F_x$ and intersects $F$ only at $x$; see the left diagram in Figure \ref{fig:leaf}.  Then $\Gamma':=\psi(\Gamma)$ is a polygonal path connecting $x'_R$ and $x'_L$. Let $U$ be the region bounded by $\Gamma$ which contains $F_x$, 
and $S_i\subset U$ be simple polygonal paths which connect each $p_i\succ x$ to $x$ without intersecting each other and  $\Gamma$; see the middle diagram in Figure \ref{fig:leaf}. 
We are going to reindex $S_i$ and $p_i$ as follows. Let $\sigma$ be a circle centered at $x$  with sufficiently small radius so that it intersects $\Gamma$ only twice, and each $S_i$ only once. Orient $\sigma$ counterclockwise, reindex $S_i$, from $i=1,\dots, k$, in order that they intersect $\sigma\cap U$, and then reindex $p_i$ accordingly. 
\begin{figure}[h]
\centering
\begin{overpic}[height=1.7in]{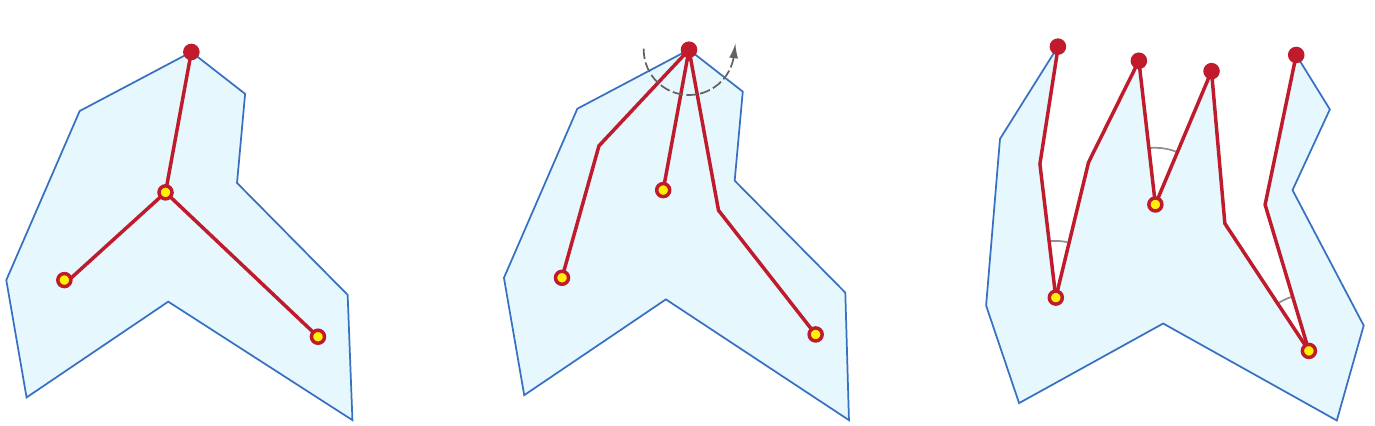}
\put(2.5,3.75){\Small$U$}
\put(1.5,17){\Small$\Gamma$}
\put(13.2,28.5){\Small$x$}
\put(39,14){\Small$S_1$}
\put(45.75,20){\Small$S_2$}
\put(52.5,10.5){\Small$S_3$}
\put(40,8.25){\Small$p_{1}$}
\put(47,14.75){\Small$p_{2}$}
\put(58.5,4.5){\Small$p_{3}$}
%\put(3.5,8.25){\Small$p_{1}$}
%\put(11.25,14){\Small$p_{2}$}
%\put(22.4,4.25){\Small$p_{3}$}
\put(74,3){\Small$U''$}
\put(76,29){\Small$x'_L$}
\put(93.5,28.25){\Small$x'_R$}
\put(38.5,3.75){\Small$U$}
\put(37.5,17){\Small$\Gamma$}
\put(70,17){\Small$\Gamma'$}
\put(49.5,28.5){\Small$x$}
\put(81,28.5){\Small$x''_{2,L}$}
\put(87,28){\Small$x''_{2,R}$}
\put(76,6.75){\Small$p''_{1}$}
\put(83.5,13.5){\Small$p''_{2}$}
\put(95,3.25){\Small$p''_{3}$}
\put(76.25,16){\tiny$\beta_1$}
\put(83.75,21){\tiny$\beta_2$}
\put(90.5,13){\tiny$\beta_3$}
\end{overpic}
\caption{}\label{fig:leaf}
\end{figure}

Now let $S:=\cup_{i}S_i$ be the resulting spanning tree for vertices of $U$. Then $\ol S:=f(S)$ is a  tree on $\ol U:=f(U)$. Let $\ol U_{\ol S}$ denote the topological disk obtained by cutting $\ol U$ along $\ol S$,  $\textup{cov}_S\colon \ol U_{\ol S}\to \ol U$  be the corresponding covering map, $u_S\colon \ol U_{\ol S}\to\R^2$ be an unfolding  given by Lemma \ref{lem:develop}, and define the multivalued mapping $\theta\colon U\to\R^2$ by
$$
\theta:=u_S\circ \textup{cov}_S^{-1}\circ f.
$$
Comparing this definition with that of $\psi$ given by \eqref{eq:phi} shows that  $\Gamma'':=\theta(\Gamma)$ is congruent to $\Gamma':=\psi(\Gamma)$. Indeed $\Gamma'$, $\Gamma''$ are determined, up to a translation, by the edge lengths of $\Gamma$ and its interior angles with respect to $U$. So we may assume that $\Gamma''=\Gamma'$ (by choosing $u_S$ appropriately, or composing it with an isometry of $\R^2$ which caries $\Gamma''$ to $\Gamma'$). Now note that $\theta$ converges to the identity map on $U$, just as $\psi$ does by Proposition \ref{prop:converge}. Thus, if we set $x'':=\theta(x)$, then $x''\to x$, as $\beta\to 0$.

Each  $p_i$ in $U$ has a single image $p_i''$ under $\theta$, while
there are two images of $x$ under $\theta$ corresponding to each $S_i$, which are denoted by $x_{i,L}''$ and $x_{i,R}''$; see the right diagram in Figure \ref{fig:leaf}. These may be defined similar to the way we defined $x_L'$ and $x_{R}'$, by extending $S$ to a triangulation of $U$.
We claim that
\begin{equation}\label{eq:tildec2}
\tilde{c}_x=\alpha_x^{-1}\sum_{i\succeq x}\alpha_i\tilde p_i,\quad\text{where}\quad\tilde p_i:=x+\frac{J(x''_{i,R}-x''_{i,L})}{\beta_i}.
\end{equation}
Indeed, since $\Gamma''=\Gamma'$, and due to our reindexing of $S_i$, we have $x'_L=x_{1,L}''$, $x'_R=x_{k,R}''$, and 
$x_{i,R}''=x_{i+1,L}''$, for $1\leq i\leq k-1$. Thus, since $\beta_i=\beta\alpha_i=\beta_x\alpha_x^{-1}\alpha_i$, 
$$
 x'_R-x'_L=\sum_{i\succeq x}(x_{i,R}''-x_{i,L}'')=\sum_{i\succeq x}\beta_iJ(x-\tilde p_i)=J(\beta_x x-\sum_{i\succeq x}\beta_i\tilde p_i)=\beta_x J(x-\tilde c_x).
$$ 
Applying $J$ to the far left and right sides of the last expression yields \eqref{eq:tildec2}. Next note that, since $\angle x''_{i,L}p_i''x''_{i,R}=\beta_i$, elementary trigonometry yields that
$$
p_i''=x''_{i,M}+\frac{J(x''_{i,R}-x''_{i,L})}{2\tan(\beta_i/2)}, \quad\text{where}\quad x''_{i,M}=\frac{x''_{i,L}+x''_{i,R}}{2};
$$
\begin{figure}[h]
\centering
\begin{overpic}[height=1in]{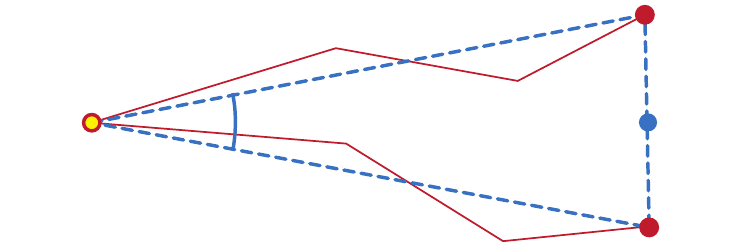}
\put(7,16.25){\Small$p_i''$}
\put(32.25,16.75){\Small$\beta_i$}
\put(88.25,30.75){\Small$x_{i,L}''$}
\put(88.5,2.25){\Small$x_{i,R}''$}
\put(88.5,16.25){\Small$x_{i,M}''$}
\put(53.5,2){\Small$S_{i,R}''$}
\put(54,20.25){\Small$S_{i,L}''$}
\end{overpic}
\caption{}\label{fig:angle}
\end{figure}
see Figure \ref{fig:angle}. Thus we have
$$
\tilde p_i-p_i=(x_{i,M}''-p_i'')(1 -2\tan(\beta_i/2)/\beta_i)+(x-x_{i,M}'')+(p_i''-p_i).
$$
Since the right hand side vanishes, as $\beta\to 0$, it follows that $\tilde p_i\to p_i$, and consequently $\tilde c_x\to c_x$ as desired. 
\end{proof}

\begin{note}\label{note:pivot}
Let $R_{p,\theta}\colon\R^2\to\R^2$ denote the clockwise rotation about the point $p$ by the angle $\theta$. As we discussed in the proof of Lemma \ref{lem:T1}, $x''_{i,R}=R_{p_i'',\beta_i}(x''_{i,L})$. So, since $p_i''\to p_i$, as $\beta\to 0$,
$$
x'_R=R_{p_1'',\beta_1}\circ\dots\circ R_{p_k'',\beta_k}(x'_L)\to R_{p_1,\beta_1}\circ\dots\circ R_{p_k,\beta_k}(x'_L)=R_{p_x,\theta_x}(x'_L),
$$
for some $p_x\in\R^2$ and  $\theta_x\in[0,2\pi)$. It is known  that 
\cite[Lem. 1]{orourke2017b}, as $\beta\to 0$,
$
\theta\to \sum_{i=1}^k\beta_i=\sum_{i\succeq x}\beta_i=\beta_x, 
$
and
$$
 p_x\to \beta_x^{-1}\sum_{i=1}^k\beta_ip_i=\alpha_x^{-1}\sum_{i\succeq x}\alpha_ip_i=c_x.
$$
So $c_x$ is the limit of the cumulative pivot point of descendant vertices of $x$, which is the justification for the term ``center of rotation". See also \cite{chen:flat} for a study of unfoldings of flat regions of convex polyhedra.
\end{note}

\begin{proof}[Proof of Theorem \ref{thm:F}]
Fix a cut forest  $F$  of $G$.  Then there exists an interior vertex $p_i$ of $G$ which does not satisfy \eqref{eq:cxe} and will be fixed henceforth. Let $\Phi_R$ and $\Phi_L$ be faces of $G$ which lie to the right and left of the oriented edge $p_ip_i^*$ respectively, see Figure \ref{fig:kite}. 
\begin{figure}[h]
\centering
\begin{overpic}[height=1.5in]{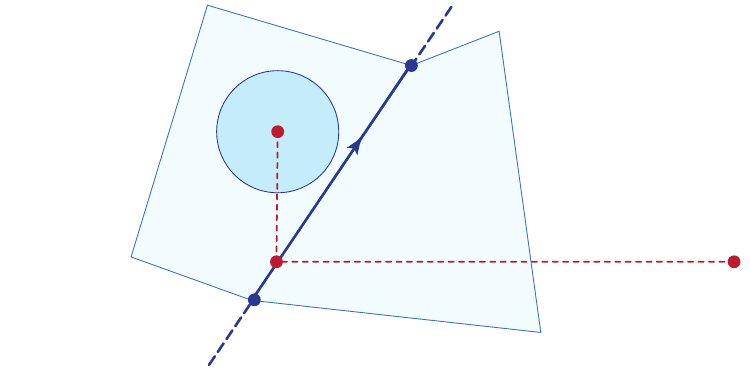}
\put(34,6.5){\small $p_i$}
\put(54,37){\small $p_i^*$}
\put(19,-1){\small $\ol{p_ip_i^*}$}
\put(36.5,11.4){\small $x$}
\put(38.25,32.5){\small $y$}
\put(31,28){\small $D$}
\put(97,11){\small $c_x$}
\put(20,16){\small $\Phi_L$}
\put(64,8){\small $\Phi_R$}
\end{overpic}
\caption{}\label{fig:kite}
\end{figure}
We will show that, for $\beta$  sufficiently small (i.e. $0<\beta\leq\beta_0(F)$), a point of $\Phi_R'$ lies in the interior of $\Phi_L'$. So $C'_{\beta,F}$ is not simple. Since $G$ admits only finitely many cut forests, this will complete the proof (once we let $\beta$ be smaller than the minimum value of $\beta_0(F)$ as $F$ ranges over all cut forests of $G$).
To start, fix $\lambda>0$ so small that
\begin{equation}\label{eq:pipj}
\left\l p_i^*-p_i,p_i-c_i\right\r\leq-\lambda |p_i^*-p_i|.
\end{equation}
Let $x$  be an interior point of $p_ip_i^*$ such that
\begin{equation}\label{eq:a-pi}
|x-p_i|\leq\lambda/2,
\end{equation}
and note that $x$ does not depend on $\beta$. Next  set
\begin{equation*}\label{eq:b}
y:=x+\beta_xJ(x-c_x), \quad\quad\quad r:=\beta_x\lambda/2,
\end{equation*}
and let $D$ be the disk of radius $r$ centered at $y$.
We will show that for $\beta$ small: $|y'-x'_R|< r$, and $D\subset \Phi_L$. Thus $x'_R\in \inte(D')\subset \inte(\Phi_L')$, as desired.

By the triangle inequality,
$$
|y'-x'_R|\leq|(x'_R-x'_L)-(y-x)| +|(y-x)-(y'-x'_L)|.
$$
 By Lemma \ref{lem:T1} we may choose $\beta$ so small that
 $|\tilde c_x-c_x|< \lambda/4$. Then, by \eqref{eq:tildec},
 $$
|(x'_R-x'_L)-(y-x)| = |\beta_xJ(x-\tilde c_x)-\beta_xJ(x-c_x)|=\beta_x|\tilde c_x-c_x|<\beta_x\lambda/4.
 $$
Set $\eta:=\diam(P)$. Note that, as $x$ lies in the interior of $p_ip_i^*$, we may choose $\beta$ so small that $y$ lies in the triangle $\Delta$ of $\Phi_L$ which rests on $p_ip_i^*$. Thus, by Proposition \ref{prop:converge},  we can  make sure that
  $$
 |(y-x)-(y'-x'_L)|\leq (\lambda/(4\eta)) |y-x|=(\lambda/(4\eta)) \beta_x|x-c_x|\leq\beta_x\lambda/4,
 $$
since $c_x\in P$ and therefore $|x-c_x|\leq\eta$.  The last three displayed expressions yield that
 $
 |y'-x'_R|<  \beta_x\lambda/2=r,
 $
as claimed.
 
As $\beta\to 0$, we have $y\to x$ and $r\to 0$. Thus, for $\beta$  small, $D\subset \Phi_R\cup \Phi_L$.
Since $y$ lies on the left side of the oriented line $\ol{p_ip_i^*}$ passing through $p_i$ and $p_i^*$,  it follows that $y\in \Phi_L$. So  it remains to check that $\dist(y, \ol{p_ip_i^*})\geq r$. 
By definition, $c_x=c_{p_i}=c_i$. Thus, by \eqref{eq:pipj} and \eqref{eq:a-pi},  
\begin{equation*}\label{eq:aca}
\l x-c_x,p_i^*-p_i\r=\l x-p_i, p_i^*-p_i\r+\l p_i-c_i, p_i^*-p_i\r\leq-(\lambda/2)|p_i^*-p_i|.
\end{equation*}
So
$$
\cos(\angle c_x x p_i^*)=-\frac{\l x-c_x,p_i^*-p_i\r}{|x-c_x||p_i^*-p_i|}\geq\frac{\lambda/2}{|x-c_x|}
=\frac{\beta_x\lambda/2}{\beta_x |x-c_x|}=\frac{r}{|y-x|},
$$
which yields
 $
  \dist(y, \ol{p_ip_i^*})=\sin(\angle yxp_i^*)|y-x|=\cos(\angle c_x x p_i^*)|y-x|\geq r,
 $
 and completes the proof.
\end{proof}

\section{A Non-monotone Convex Subdivision of \\the Equilateral Triangle}\label{sec:gadget}
A convex subdivision of a convex polygon  is \emph{weighted} if the corresponding graph $G$ is weighted, as defined in Section \ref{sec:pseudo}. Further the subdivision, or its graph $G$, is \emph{non-monotone} provided that $G$ admits no monotone cut forests, as defined in Section \ref{sec:cut}. In this section we show:
\begin{thm}\label{thm:gadget}
The equilateral triangle admits a non-monotone weighted convex subdivision with $84$ interior vertices.
\end{thm}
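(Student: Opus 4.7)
The plan is to realize $G$ as a convex subdivision of the equilateral triangle $P$ whose interior is dominated by two interlocking spiral paths (a \emph{double spiral}) that are joined at a short central bridge, together with a minimal set of transverse edges reaching $\partial P$ which render every face convex. Each spiral carries many interior vertices placed on a discrete approximation of an inward-spiraling curve, and the weights $\alpha_i>0$ with $\sum\alpha_i=1$ are concentrated near the two inner ends of the spirals. The vertex budget of $84$ should split as roughly $40$ per spiral, plus a handful of bridge and stabilizer vertices needed to make the subdivision convex and to force a rigid combinatorial structure.

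First I would fix the spiral geometry so that consecutive vertices $p_{i-1},p_i,p_{i+1}$ along each spiral turn by a small but strictly positive constant angle, and so that the chord from $p_i$ to the inner end of its spiral is uniformly \emph{past} perpendicular to the forward spiral tangent at $p_i$. This is the quantitative geometric input; it guarantees that if the parent $p_i^*$ of $p_i$ in a cut forest lies on the outward-along-the-spiral side of $p_i$, but $c_i$ is pulled toward the inner-end cluster, then $\angle p_i^* p_i c_i<\pi/2$, violating the monotonicity form \eqref{eq:cxe2}. Next I would specify the non-spiral edges of $G$ very parsimoniously: each interior spiral vertex has exactly one spiral neighbor in each direction along its spiral and at most one transverse neighbor, chosen so that faces are convex and so that in any cut forest the parent of $p_i$ must lie among at most two combinatorially determined candidates.

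I would then enumerate the cut forests $F$ of $G$. The tight local valence forces the restriction of $F$ to each spiral to be a directed union of sub-paths terminating either at $\partial P$, at the central bridge, or at an excursion through a transverse edge; a case analysis bounded by the small number of structural types produces, in each case, a specific interior vertex $p_i$ whose parent $p_i^*$ is along its spiral toward $\partial P$ while the bulk of $p_i$'s descendants (with the concentrated weights) sit at the opposite inner end. The uniform perpendicularity-gap built into the spiral then gives $\angle p_i^* p_i c_i<\pi/2$ with a quantitative margin. The central bridge is essential: it prevents both spirals from being oriented outward simultaneously by any single forest without leaving some spiral vertex whose parent points \emph{inward}, where the same angle computation applies on the reversed orientation.

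The principal obstacle will be exhaustively ruling out exotic cut forests that use transverse edges to short-circuit long arcs of a spiral, since such short-circuits can sharply shift $c_i$ for nearby vertices. The weight distribution, the placement of the transverse edges, and the spiral curvature must be co-tuned so that each short-circuited configuration still produces a vertex at which monotonicity fails, and it is this joint tuning that forces the explicit figure of $84$ interior vertices: fewer vertices would leave either too little weight concentration or too few spiral turns to maintain a uniform perpendicularity gap against every combinatorial type of cut forest. Finally, the assertion that $P$ is an equilateral triangle (rather than a general polygon) plays only a cosmetic role, ensuring that the double-spiral gadget can be inscribed symmetrically and that the faces touching $\partial P$ are manifestly convex; the same construction would work over any convex polygon large enough to contain it.
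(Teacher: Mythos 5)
Your plan shares the paper's general strategy---a double spiral with the weight concentrated at the two spiral centers, attacked through the angle form \eqref{eq:cxe2} of monotonicity---but the mechanism you propose for the contradiction is not the one that works, and as stated it would fail. For a single heavy vertex, no arrangement of spiral chords can force an angle violation on every path to the boundary: in a convex subdivision there always exists, from any vertex, a path that is radially monotone with respect to any prescribed reference point (this is noted and used in the paper's proof), so a cut forest may simply route the heavy vertex's ancestral path outward along such a path and monotonicity is satisfied at every vertex having the heavy vertex as a descendant; at all other vertices the center of rotation is an uncontrolled average of light vertices and gives you no leverage. Indeed your own geometric hypothesis is self-undercutting: if the chord from $p_i$ to the inner end is ``past perpendicular'' to the forward tangent and $c_i$ sits near the inner end, then an outward parent gives $\angle p_i^* p_i c_i\geq \pi/2$, i.e.\ monotonicity \emph{holds}, not fails. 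The paper's actual contradiction is two-staged and combinatorial rather than a pointwise angle violation: Lemma \ref{lem:subpath1} shows that the forced radially monotone path from each heavy vertex $p_{\pm1}$ is unique and wraps through \emph{both} spiral cores (it contains both $p_{24}$ and $p_{-24}$), so the two ancestral paths must merge at $p_{\pm24}$; after the merge both heavy vertices are descendants, the center of rotation jumps to the midpoint $o$, and Lemma \ref{lem:subpath2} shows the continuation radially monotone about $o$ is forced back through $p_{15}$, a vertex already traversed---producing a cycle in the forest, which is impossible. Your ``central bridge prevents both spirals from being oriented outward simultaneously'' gestures at an interaction of the two spirals, but in the correct argument both paths \emph{are} oriented outward; what is missing in your proposal is precisely the idea that the forced paths must intersect and that the post-merge shift of the center of rotation creates a cycle.

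Beyond this, the proposal is a design brief rather than a proof: no coordinates, edges, or faces are given, convexity of the faces is not verified, and the promised ``enumeration of cut forests'' is not carried out and is not feasible as stated (the number of cut forests is enormous; the paper never enumerates them, but instead uses the uniqueness of radially monotone paths, verified by finite computation in \cite{barvinok&ghomi:mathematica}, to pin down the only ancestral paths that matter). Your remark that the equilateral triangle is inessential for this theorem is fair---it matters only for the assembly over a tetrahedron in Theorem \ref{thm:main}---but the core of the argument still needs the explicit construction and the merge-and-cycle mechanism to be supplied.
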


\noindent Earlier Tarasov \cite{tarasov} had constructed a non-monotone subdivision of the  equilateral triangle with over $500$ vertices.
Here we simplify that construction as follows.

\subsection{Coordinates and weights}
The edge graph $G$ of our subdivision  is illustrated in Figures \ref{fig:triangle}, 
  \begin{figure}[h]
\centering
\begin{overpic}[width=2.5in]{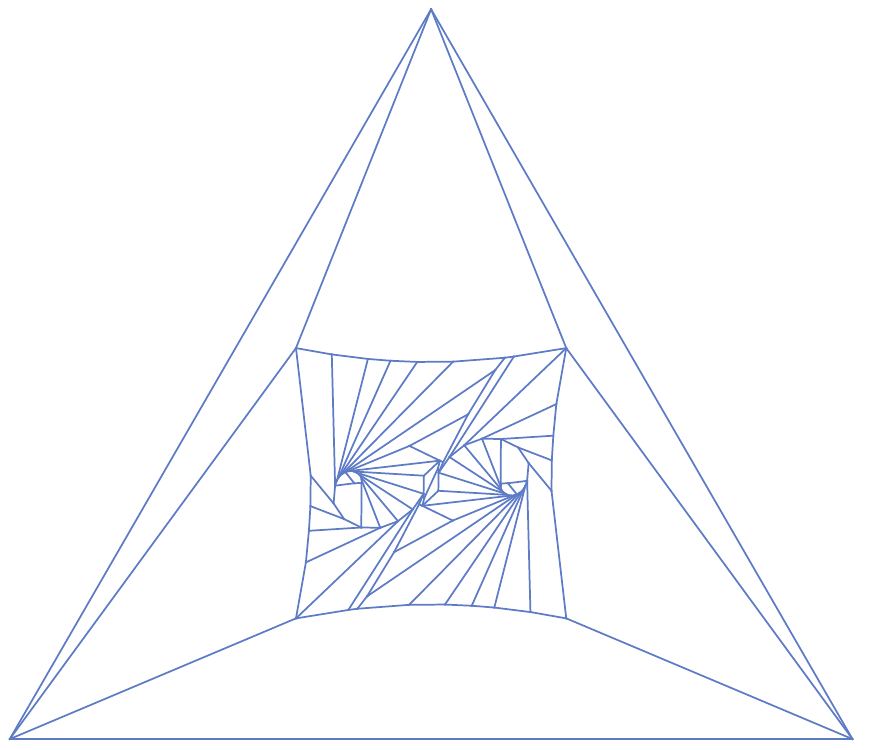}
\put(48,86){\small $a$}
\put(-2.1,-1){\small $b$}
\put(98,-1){\small $c$}
\end{overpic}
\caption{}\label{fig:triangle}
\end{figure}
and a larger depiction of the subgraph of $G$ spanned by its vertices in the interior of the triangle appears in Figure \ref{fig:square}. We call this subgraph \emph{the square}.
We assume that the triangle has vertices $a:=(0, 140 \sqrt{3} - 85)$, $b:=(-140, -85)$, and $c:=(140, -85)$.
The square is symmetric with respect to reflection through $o:=(0,0)$ and has $84$ vertices. We label half of these vertices  by $p_i$, $i=1,\dots, 42$, 
as shown in Figure \ref{fig:square}. 
The coordinates of these points are listed in Table \ref{tab:coordinates}, 
as well as in an accompanying \emph{Mathematica} notebook \cite{barvinok&ghomi:mathematica} that we have provided.
 The other vertices of the square are the reflection of $p_i$, and will be denoted by $p_{-i}:=-p_i$.   
We assume that $p_{\pm1}$ have equal weights, and the weights of all other vertices are  arbitrarily small. 
\begin{figure}[h]
\centering
\begin{overpic}[width=4.9in]{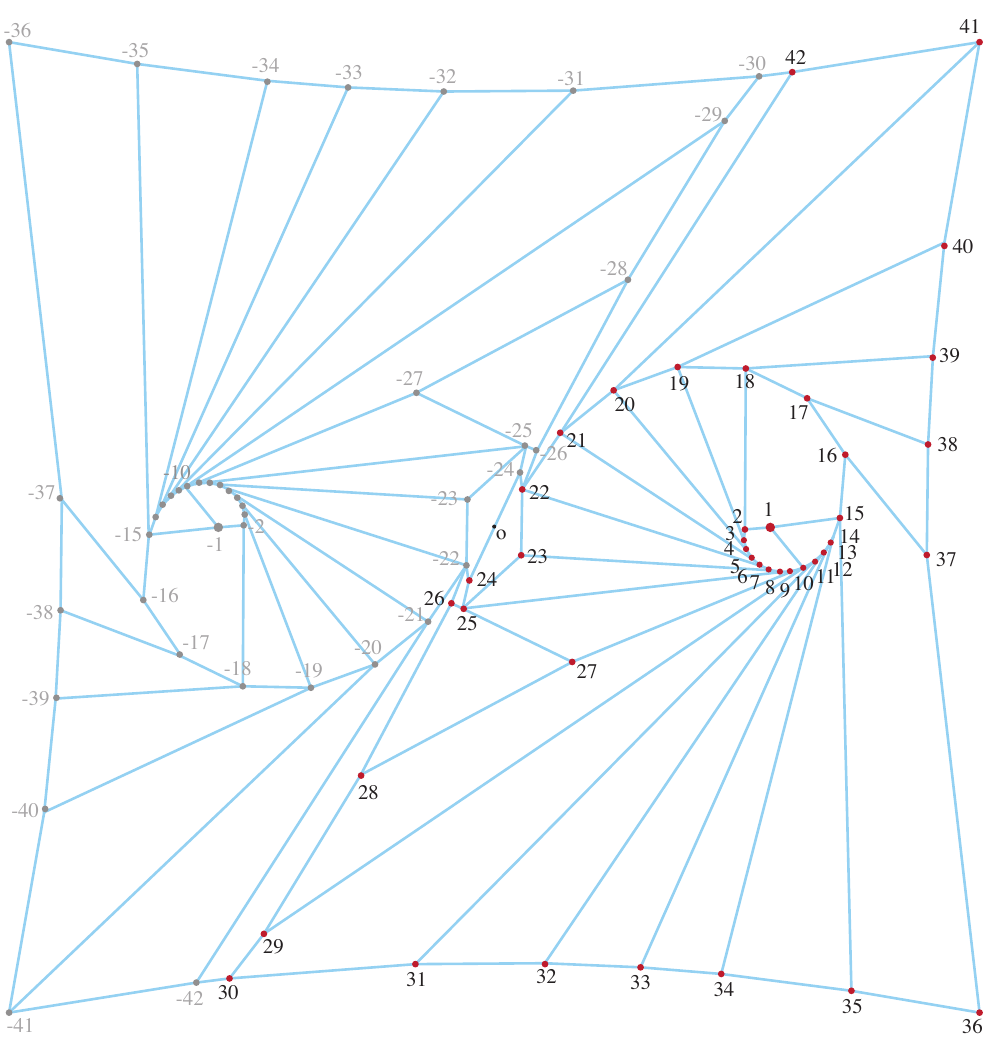}
\end{overpic}
\caption{}\label{fig:square}
\end{figure}

 \subsection{Main properties}\label{subsec:properties}
As in \cite{orourke2016}, we say a path $\Gamma=(v_1, \dots, v_n)$ of $G$ is \emph{radially monotone} with respect to a point $x$ provided that 
\begin{equation}\label{eq:rm}
 \angle(v_{i+1}, v_i, x)\geq \pi/2
\end{equation}
 for $i=1,\dots, n-1$. We say a path in $G$ is \emph{maximal} if it ends on the boundary of $G$. Our subdivision has been designed so that it has  two important features, as expressed in the following lemmas and illustrated in Figure \ref{fig:spirals}.

 \begin{lem}\label{lem:subpath1}
 Let  $\Gamma$ be a path in $G$ which originates at $p_1$ and is radially monotone with respect to $p_1$. Then $\Gamma$ must be a subpath of
 \begin{equation}\label{eq:Gamma1}
(p_1, p_{\ell},p_{\ell+1},\dots, p_{21},p_{22},p_{-24},p_{24}, p_{25}, p_{26}, p_{28}, p_{29}, p_{30}, p_{-42},p_{-41}, q),
\end{equation}
 where $\ell=2$, $10$, or $15$, and $q$ is a vertex of the triangle.
  \end{lem}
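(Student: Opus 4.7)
The strategy is to prove the lemma by propagating the radial-monotonicity condition one edge at a time along $\Gamma$ and verifying, with the explicit coordinates of the interior vertices recorded in Table \ref{tab:coordinates}, that at each step there is a unique admissible successor, namely the next vertex in the listed sequence.

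First I would reformulate \eqref{eq:rm} algebraically: $\angle(v_{i+1},v_i,p_1)\geq \pi/2$ is equivalent to the linear inequality
\[
\l v_{i+1}-v_i,\; p_1-v_i\r\leq 0,
\]
which says that $v_{i+1}$ lies in the closed half-plane $H_{v_i}$ through $v_i$ whose bounding line is perpendicular to the segment from $v_i$ to $p_1$ and which does not contain $p_1$ in its interior. This reduces the whole lemma to a finite set of elementary dot-product checks.

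Next, since the radial-monotonicity condition is vacuous at $v_1=p_1$, the candidates for $v_2$ are exactly the neighbors of $p_1$ in $G$. Reading these off from Figure \ref{fig:square} — and confirming with the accompanying notebook \cite{barvinok&ghomi:mathematica} — one sees that they are precisely $p_2,p_{10},p_{15}$, which produces the three cases $\ell=2,10,15$ in the statement. I would then iterate: at each $v_i$ list its $G$-neighbors, test each against the algebraic condition above, and confirm that the unique neighbor lying in $H_{v_i}$ is the vertex following $v_i$ in the listed sequence. In each of the three initial branches this forces successively $p_{\ell+1},p_{\ell+2},\dots,p_{21}$, and then all three branches are forced to merge at $p_{22}$; from $p_{22}$ onward the common tail $p_{-24},p_{24},p_{25},p_{26},p_{28},p_{29},p_{30},p_{-42},p_{-41},q$ is produced by the same local check. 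At $p_{-41}$ the only admissible continuation is a vertex $q$ of the outer triangle, which ends the argument since $q\in\d P$.

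The main obstacle is purely computational: there are several dozen dot products to verify, one for each edge of $G$ incident to a vertex on the candidate path. No conceptual ingredient is needed beyond the algebraic reformulation of \eqref{eq:rm}, and the labor is mechanized by the Mathematica file, which can tabulate $\l w-v_i,p_1-v_i\r$ for every neighbor $w$ of each $v_i$ on the path and flag precisely those neighbors that give a non-positive value. The role of the careful placement of coordinates in Table \ref{tab:coordinates} is exactly to ensure that, at every vertex along the listed sequence, all but one of the incident edges of $G$ violate the radial-monotonicity inequality strictly, so that the propagation is deterministic.
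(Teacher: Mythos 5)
Your proposal is correct and takes essentially the same approach as the paper: reformulate \eqref{eq:rm} as a dot-product (half-plane) test and verify, vertex by vertex using the coordinates of Table \ref{tab:coordinates} (with the computations delegated to the Mathematica notebook), that each vertex of the listed sequence has a unique admissible successor in $G$. One small point: termination should be justified by checking that at the triangle vertex $q$ no incident edge satisfies the inequality (as the paper notes via the angles at the corners), not merely by $q\in\partial P$; this is covered by running your per-vertex check at $q$ as well.
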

  \begin{proof}
  The only vertices of $G$ which are adjacent to $p_1$ are $p_\ell$, where $\ell=2$, $10$, or $15$; see the left diagram in Figure \ref{fig:paths}.
 Thus the second vertex of $\Gamma$ must be $p_\ell$. Suppose that $\ell=2$. Other than $p_1$, the only vertices adjacent to $p_2$  are $p_{18}$ and $p_3$. One quickly checks that $\angle(p_3,p_2, p_1)\geq \pi/2$ while $\angle(p_{18},p_2, p_1)<\pi/2$. Thus $p_3$ (or $p_{\ell+1}$) is the only choice for the next vertex of $\Gamma$. Similarly, to complete the proof, it suffices to check  that, if we denote the vertices of the proposed path \eqref{eq:Gamma1} by $v_i$, $i=1,\dots, n$, then $v_{i+1}$ is the only vertex $w$ adjacent to $v_i$ such that  $\angle(w,v_i, p_1)\geq \pi/2$. Finally, once the path reaches a vertex of the triangle, then it cannot be extended further, since the angles of the triangle are all obtuse. We omit the computations  since they are trivial, and refer the reader instead to the accompanying \emph{Mathematica} notebook \cite{barvinok&ghomi:mathematica}, where all the computations have been recorded. 
   \end{proof}
   
    \begin{table}[h]
{\tiny
\begin{align*}
 \mathbf{1} &: (25.5,0)     &  \mathbf{2} &: (23.1,-0.1) &  \mathbf{3} &: (23.1,-1.1) &  \mathbf{4} &: (23.3,-2.1) &\mathbf{5} &: (23.9,-2.9)\\
 \mathbf{6} &: (24.7,-3.5) &  \mathbf{7} &: (25.6,-3.9) &  \mathbf{8} &: (26.6,-4.1) &  \mathbf{9} &: (27.6,-4)     & \mathbf{10}&: (28.5,-3.7)\\
 \mathbf{11} &: (29.4,-3.2) & \mathbf{12} &: (30.2,-2.6)&\mathbf{13} &: (30.8,-1.8)    & \mathbf{14} &: (31.3,-0.9) & \mathbf{15} &: (31.9,0.8)\\
 \mathbf{16} &: (32.4,6.7) & \mathbf{17} &: (28.9,11.9) & \mathbf{18} &: (23.2,14.7) & \mathbf{19} &: (16.9,14.8) & \mathbf{20} &: (11,12.6)\\
\mathbf{21} &: (6.1,8.7)    & \mathbf{22} &: (2.5,3.5)     & \mathbf{23} &: (2.4,-2.5) & \mathbf{24} &: (-2.3,-5)         & \mathbf{25} &: (-2.9,-7.5)\\
 \mathbf{26} &: (-4,-7)       & \mathbf{27} &: (7.2,-12.4) & \mathbf{28} &: (-12.3,-23) & \mathbf{29} &: (-21.3,-37.6)    & \mathbf{30} &: (-24.5,-41.7) \\
\mathbf{31} &: (-7.3,-40.4) & \mathbf{32} &: (4.6,-40.3) & \mathbf{33} &: (13.5,-40.7)    & \mathbf{34} &: (21,-41.3) & \mathbf{35} &: (33,-42.8)  \\
\mathbf{36} &: (44.9,-44.9) & \mathbf{37} &: (40,-2.5)    & \mathbf{38} &: (40,7.6)     & \mathbf{39} &: (40.6,15.8)         & \mathbf{40} &:(41.6,26.3)\\
\mathbf{41} &: (44.9,44.9)    & \mathbf{42} &: (27.5,42.1)
\end{align*}
}\caption{}\label{tab:coordinates}
\end{table}

     \begin{lem}\label{lem:subpath2}
Let $\Gamma$ be a maximal path in $G$ which originates at $p_{24}$ and is radially monotone with respect to $o$.  Then  
$$
 \Gamma=(p_{24},p_{25},p_{27},p_{9},p_{10},\dots, p_{15},\dots).
$$ 
In particular $\Gamma$ contains $p_{15}$.
 \end{lem}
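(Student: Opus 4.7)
The plan is to mirror the proof of Lemma \ref{lem:subpath1} almost verbatim, substituting the basepoint $o$ for $p_1$ and substituting the proposed initial segment for the one in \eqref{eq:Gamma1}. Denote the vertices of the claimed initial segment by $v_1=p_{24}$, $v_2=p_{25}$, $v_3=p_{27}$, $v_4=p_9$, $v_5=p_{10}$, \dots, $v_{10}=p_{15}$. For each $i$ in turn, I would enumerate the neighbors of $v_i$ in $G$ using Figure \ref{fig:square} and check, using the coordinates of Table \ref{tab:coordinates}, that $v_{i+1}$ is the \emph{only} neighbor $w$ of $v_i$ satisfying the radial monotonicity condition
\[
\angle(w,v_i,o)\;\geq\;\pi/2,
\]
equivalently $\langle w-v_i,\,v_i-o\rangle \geq 0$. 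Since this inequality is equivalent to $\langle w-v_i, v_i\rangle \ge 0$ (because $o$ is the origin), each verification is a single sign check on an inner product of two concretely given vectors. Once the path has been forced through $v_{10}=p_{15}$, we have in particular established that $\Gamma$ contains $p_{15}$, which is all the lemma asserts.

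Two small technical points need to be handled at the very beginning. First, $v_1=p_{24}$ itself must satisfy nothing (it is the initial vertex of $\Gamma$), so the inductive verification genuinely starts at the step $v_1\to v_2$: we must enumerate neighbors of $p_{24}$ and show that only $p_{25}$ (among $p_{-24}$ and the other neighbors visible in Figure \ref{fig:square}) gives a nonnegative inner product $\langle w-p_{24},p_{24}\rangle$. Second, at $v_3=p_{27}$ the candidate $v_4=p_9$ lies on the "opposite" side of the square relative to $p_{27}$'s other neighbors, so the verification here is the least obvious geometrically — but it is still a single inner product check against the coordinates in Table \ref{tab:coordinates}.

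As in the proof of Lemma \ref{lem:subpath1}, I would not write out these nine elementary computations in the text and would instead cite the accompanying \emph{Mathematica} notebook \cite{barvinok&ghomi:mathematica}, where the inner products $\langle w-v_i, v_i\rangle$ are tabulated for every neighbor $w$ of every $v_i$ along the proposed segment. The main (and only genuine) obstacle is ensuring that the enumeration of neighbors of each $v_i$ agrees with what is drawn in Figure \ref{fig:square}; once the adjacency list is correct, the monotonicity inequality distinguishes the intended successor from all alternatives at every step, and the proof is complete.
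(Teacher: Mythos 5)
Your proposal is correct and takes essentially the same route as the paper's proof: the paper likewise forces the path vertex by vertex, checking at each step that the claimed successor is the only neighbor satisfying the radial monotonicity condition \eqref{eq:rm} with respect to $o$, and defers the elementary coordinate computations to the accompanying \emph{Mathematica} notebook \cite{barvinok&ghomi:mathematica}. Your reformulation of the angle condition as the sign check $\langle w-v_i,\,v_i\rangle\geq 0$ (using that $o$ is the origin) is a correct and harmless restatement.
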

 \begin{proof}
As in the proof of Lemma \ref{lem:subpath1}, one may easily check that the only possible choice for the successor of $p_{24}$ in $\Gamma$, which would satisfy \eqref{eq:rm}, is $p_{25}$. Similarly, one may recover all other vertices as well; see the right diagram in Figure \ref{fig:paths}.
 This involves a series of trivial computations which are included in the accompanying \emph{Mathematica} notebook \cite{barvinok&ghomi:mathematica}.
 \end{proof}

    \begin{figure}[h]
\centering
\begin{overpic}[height=1.5in]{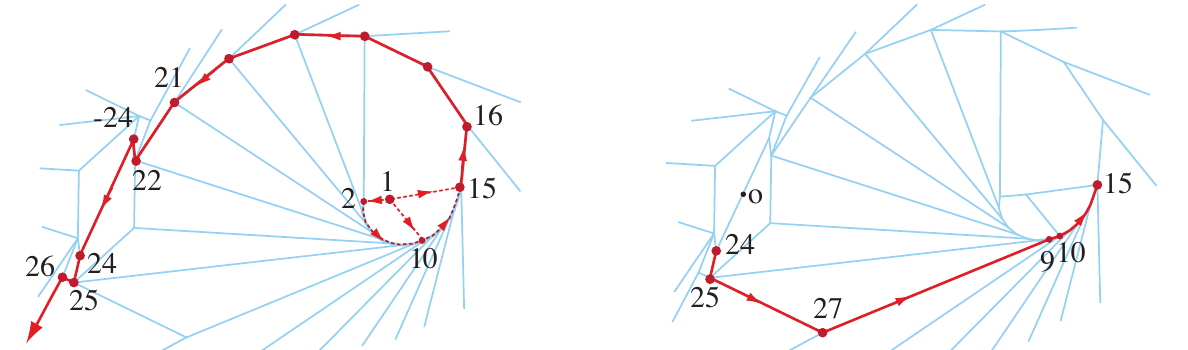}
\end{overpic}
\caption{}\label{fig:paths}
\end{figure}

 \subsection{Proof of Theorem \ref{thm:gadget}}
 
 We claim that the subdivision generated by the weighted graph $G$ described above is non-monotone. Assume, towards a contradiction, that  $G$  admits a monotone cut forest $F$ (as defined in Section \ref{subsec:cutforest}). Then each vertex $p_i$ of the square part of $G$ has a (unique) ancestral path in $F$, which we denote by $\Gamma_{i}$. Recall that the final vertex of $\Gamma_i$ must be a vertex of the triangle, while all other vertices, which we call the interior vertices of $\Gamma_i$, lie in the interior of the triangle, or the square part. 
   
First note that $\Gamma_1$ and $\Gamma_{-1}$ must share an interior vertex.  If not, then $p_{-1}$ cannot be a descendant of any interior vertex  of $\Gamma_1$. Consequently, the center of rotation of  each interior vertex of $\Gamma_1$  remains arbitrarily close to $p_1$, since all vertices of $G$ other than $p_{\pm 1}$ have arbitrarily small weights by assumption. It follows then (via condition \eqref{eq:cxe2}) that $\Gamma_1$ is almost radially monotone with respect to $p_1$, i.e., vertices of $\Gamma_1$ satisfy condition \eqref{eq:rm}  for $x=p_1$ and $\pi/2-\epsilon$, for arbitrarily small $\epsilon>0$. Thus  $\Gamma_1$ must be radially monotone with respect to $p_1$, since there are only finitely many paths in $G$, and by convexity there exist radially monotone paths, with respect to any given point, which emanate from any given vertex of $G$. 
So, by Lemma \ref{lem:subpath1}, $\Gamma_1$ contains $p_{\pm 24}$. By symmetry, $\Gamma_{-1}$ must contain these vertices as well, since the interior vertices of $\Gamma_{-1}$ are  the reflections of interior vertices of $\Gamma_1$.
 So $\Gamma_{1}$ and $\Gamma_{-1}$  must  join at some interior vertex.

  Let $p_{m}$ be the first (interior) vertex where $\Gamma_1$ and $\Gamma_{-1}$ join. Then, as we described above, the subpath of $\Gamma_{1}$ from $p_{1}$ to $p_m$ will be radially monotone with respect to $p_1$. So, by Lemma \ref{lem:subpath1}, it must be a (proper) subpath of \eqref{eq:Gamma1}. Similarly, by symmetry, the subpath of $\Gamma_{-1}$ from $p_{-1}$ to $p_m$ must be the reflection of a  subpath of \eqref{eq:Gamma1}.
Hence the only possibilities are: $m=24$ or $m=-24$. After a reflection, we may assume that $m=24$; see  Figure \ref{fig:spirals}. Then $p_1$ and $p_{-1}$ are both descendants of $p_{24}$; therefore, the center of rotation of any  vertex of $\Gamma_{24}$ is  arbitrarily close to the center of mass of $p_1$ and $p_{-1}$, which is $o$. So $\Gamma_{24}$  is radially monotone with respect to $o$. Consequently, by Lemma \ref{lem:subpath2}, $\Gamma_{24}$ contains  $p_{15}$. But $\Gamma_{24}$ is a subpath of $\Gamma_{1}$, which already passes through $p_{15}$ prior to reaching $p_{24}$.  Thus $\Gamma_{1}$ contains a  cycle, which is the desired contradiction, and completes the proof of Theorem \ref{thm:gadget}.

  \begin{figure}[h]
\centering
\begin{overpic}[height=1.6in]{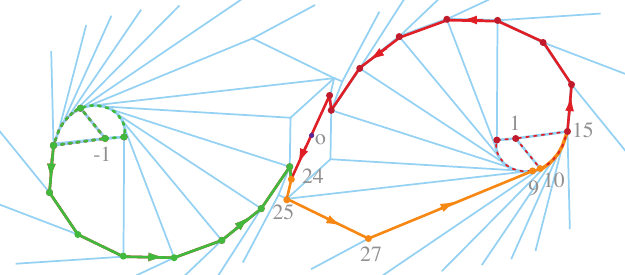}
\put(85,40){$\Gamma_1$}
\put(9,2){$\Gamma_{-1}$}
\end{overpic}
\caption{}\label{fig:spirals}
\end{figure}

Theorem \ref{thm:gadget} together with Theorem \ref{thm:F} now immediately yields:

\begin{cor}\label{cor:gadget}
There exists a convex cap $C$ over the equilateral triangle with $84$ interior vertices and a pseudo-edge graph with respect to which $C$ is not unfoldable. Furthermore, the total curvature of $C$ may be arbitrarily small.\qed
\end{cor}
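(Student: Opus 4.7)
The corollary is essentially a direct assembly of the machinery already built up in Sections~\ref{sec:caps}--\ref{sec:gadget}, so the plan is to chain together the relevant results rather than do any new work.

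First, I would invoke Theorem~\ref{thm:gadget} to obtain a non-monotone weighted convex subdivision of the equilateral triangle $P$ whose graph $G$ has exactly $84$ interior vertices $p_i$ with positive weights $\alpha_i$ summing to $1$. Next, for any parameter $\beta$ with $0 < \beta < 2\pi$, Corollary~\ref{cor:C} furnishes a convex cap $C_\beta$ over $P$ whose interior vertices project to the $p_i$ and carry curvatures $\beta_i = \alpha_i \beta$; in particular $\kappa(C_\beta) = \beta$.

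The next step is to produce the required pseudo-edge graph. By Proposition~\ref{prop:E}, for all sufficiently small $\beta$ the combinatorial graph $G$ lifts uniquely to a pseudo-edge graph $\ol G$ of $C_\beta$ (its edges are the canonical geodesic lifts supplied by Lemma~\ref{lem:E}). Now I apply Theorem~\ref{thm:F}: since $G$ is non-monotone, for all sufficiently small $\beta$ the cap $C_\beta$ admits no injective unfolding with respect to any spanning tree of $\ol G$, i.e.\ $C_\beta$ is not unfoldable with respect to $\ol G$.

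Finally, to obtain the bound on total curvature, I simply observe that $\kappa(C_\beta) = \beta$ can be taken as small as needed: given any prescribed $\beta_0 > 0$, we may choose $\beta \leq \beta_0$ that is also smaller than the thresholds required by Proposition~\ref{prop:E} and Theorem~\ref{thm:F}, and set $C := C_\beta$. There is no genuine obstacle here; the only subtlety is to make sure the same value of $\beta$ is small enough to trigger all three of the preceding results simultaneously, which is immediate since each requires $\beta$ to lie below a fixed positive threshold.
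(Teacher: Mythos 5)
Your proposal is correct and follows essentially the same route as the paper, which derives the corollary immediately by combining Theorem~\ref{thm:gadget} with Theorem~\ref{thm:F} (via Corollary~\ref{cor:C} and Proposition~\ref{prop:E}), with $\kappa(C_\beta)=\beta$ taken small enough to satisfy all the smallness thresholds at once. The only cosmetic slip is speaking of ``spanning trees'' of $\ol G$ where the paper's notion of unfoldability for caps is phrased in terms of cut forests, but this does not affect the argument since you invoke Theorem~\ref{thm:F} directly.
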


\section{Proof of Theorem \ref{thm:main}}\label{sec:proof}

We need only one more observation. A \emph{simple arc} $\Gamma$ in a topological space $X$ is the image of a continuous  mapping $\gamma\colon[a,b]\to X$, which is one-to-one on $(a,b)$. We say $\Gamma$ is a \emph{loop} provided that $\gamma(a)=\gamma(b)$. The following basic fact is also used in \cite{bern-demain-epstein} to construct examples of un-unfoldable polyhedra. We omit the proof since it is fairly trivial (e.g. it follows by considering the different combinations). 

\begin{lem}\label{lem:tetrahedron}
Let $E\subset\S^2$ be an embedded graph which is isomorphic to the edge graph of a tetrahedron. Suppose there exists a simple arc $\Gamma_i$ in each face $\Phi_i$ of $E$ whose end points are distinct 
vertices  of $\Phi_i$, and whose interior lies in the interior of $\Phi_i$. Then $\Gamma:=\cup_i \Gamma_i$ contains a loop.
\end{lem}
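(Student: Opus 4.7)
The plan is to regard $\Gamma$ as an embedded (multi)graph in $\mathbf{S}^2$ and apply a simple counting argument. First I would observe that since the interiors of distinct faces $\Phi_i$ of $E$ are pairwise disjoint, and since the interior of each arc $\Gamma_i$ lies in the interior of $\Phi_i$, any two arcs $\Gamma_i$ and $\Gamma_j$ can intersect only at their endpoints, which by hypothesis are among the $4$ vertices of $E$. Thus $\Gamma$ is the underlying set of an embedded graph whose vertex set is contained in the $4$-element vertex set of $E$ and whose edge set is $\{\Gamma_1,\dots,\Gamma_4\}$.

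Next I would invoke the elementary fact that a forest on $n$ vertices has at most $n-1$ edges. Since $\Gamma$ has at most $4$ vertices and exactly $4$ edges, it cannot be a forest (in the multigraph sense), and therefore contains a cycle. Concretely, this cycle is a sequence $\Gamma_{i_1},\dots,\Gamma_{i_k}$ of distinct arcs whose endpoints match cyclically, with $k\geq 2$ (the case $k=1$ being excluded because each $\Gamma_i$ has distinct endpoints).

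To finish, I would form the concatenation of $\Gamma_{i_1},\dots,\Gamma_{i_k}$ along the matched endpoints. This is a continuous closed curve in $\mathbf{S}^2$, and its only possible self-intersections are at the $k$ cycle vertices, because each $\Gamma_{i_j}$ is itself a simple arc and the interiors of different $\Gamma_{i_j}$ lie in the disjoint interiors of different faces $\Phi_{i_j}$. Parametrizing this closed curve by $\gamma\colon[a,b]\to\mathbf{S}^2$ with $\gamma(a)=\gamma(b)$ and $\gamma$ one-to-one on $(a,b)$ exhibits the desired loop inside $\Gamma$. The only case needing a separate word is when two arcs $\Gamma_i$, $\Gamma_j$ share both endpoints; but then they already form a bigon, which is itself a loop.

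I do not expect a genuine obstacle: the argument is just a counting plus an easy check that the resulting closed curve is simple. The one point to state cleanly is the multigraph formulation, so that coincident endpoints (and in particular the two-arc bigon case) are handled uniformly by the same ``$n$ vertices, $\geq n$ edges $\Rightarrow$ cycle'' principle.
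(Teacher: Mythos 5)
Your proof is correct. Note that the paper does not actually write out an argument for this lemma: it declares it ``fairly trivial'' and points to a finite case check (``considering the different combinations'' of which pair of vertices each of the four arcs joins). Your route is genuinely different and arguably cleaner. You package the four arcs as an embedded multigraph whose vertex set lies in the four vertices of $E$, note that four edges on at most four vertices cannot form a forest, and then upgrade the resulting combinatorial cycle to an actual loop in $\S^2$ using that distinct arcs have interiors in disjoint open faces (so the concatenation is injective away from the identified endpoints) and that these interiors avoid the vertices of $E$ altogether, since vertices lie on $E$ and not in any open face. The two places where a careless count could go wrong --- a $1$-cycle, excluded because each $\Gamma_i$ has distinct endpoints, and a $2$-cycle (bigon), which is still a bona fide loop --- are exactly the points you address explicitly via the multigraph formulation. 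What each approach buys: the paper's implicit case analysis is short to state but tied to the specific combinatorics of the tetrahedron, while your counting argument isolates the only feature actually used, namely that the number of faces is at least the number of vertices, so it transfers verbatim to any embedded graph with that property; the small price is the extra (routine) paragraph verifying that the abstract cycle realizes a simple closed curve, which the hypothesis on arc interiors makes immediate.
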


Now we are ready to prove the main result of this work:

\begin{proof}[Proof of Theorem \ref{thm:main}]
Let $C$ be the convex cap over the equilateral triangle given by Corollary \ref{cor:gadget}. By Lemma \ref{lem:BetaToZero} we may assume that the curvature $\kappa(C)$ is so small that the total angles of $C$ at each of its boundary vertices is less than $2\pi/3$. Let $C_i$, $i=1,\dots, 4$, be congruent copies of $C$ positioned over the faces of a regular tetrahedron, $K:=\cup_i C_i$, and $E$ be the union of the pseudo-edges $E_i$ of $C_i$. Since $C_i$ have $84$ interior vertices each,  $K$ has $340$ vertices. We claim that $K$ is not unfoldable with respect to $E$. To see this  suppose that $T$ is a spanning tree of $E$, and let $F_i$ be the closure of the restriction of $T$ to the interior of $C_i$. If each $F_i$ contains an arc connecting a pair of boundary vertices of $C_i$, then, by Lemma \ref{lem:tetrahedron}, $T$ must contain a loop (or a cycle) which is not possible. Thus  $F_j$ must form a spanning forest of $E_j$ for some $1\leq j\leq 4$.
Consequently, by Corollary \ref{cor:gadget}, the unfolding of $C_j$ with respect to $F_j$ is not simple. Hence the unfolding of $K$ with respect to $T$ is not simple, which completes the proof.
\end{proof}

\begin{note}
The obvious question at the conclusion of this work is whether the above construction may yield  a counterexample to (the original form of) D\"{u}rer's conjecture. The answer would depend on whether the partition of the equilateral triangle in Section \ref{sec:gadget}, or some variation of it, can be \emph{lifted} to a convex cap, i.e., whether there exists a convex cap over the equilateral triangle whose edges (not only vertices) project onto the edges of the partition. If so, these caps would generate an edge un-unfoldable polyhedron when assembled on the faces of a tetrahedron, as described above. It is well-known that not every convex partition of a convex polygon can be lifted \cite[p. 56]{deloera-santos}. A necessary and sufficient condition, called the \emph{Maxwell-Cremona correspondence} \cite{schulz,whiteley, izmestiev2017statics}, is that the edges of the subdivision admit an equilibrium stress, which can be expressed as a system of linear equations. Thus, to produce a counterexample to D\"{u}rer's conjecture (if one exists) it would suffice to find a non-monotone subdivision of the equilateral triangle which satisfies these equations. Alternatively, if  no such subdivision exists, then that would yield more evidence in support of the conjecture.
\end{note}

\section*{Acknowledgements}
Our debt to the original investigations of  A. Tarasov \cite{tarasov} is evident throughout this work. Thanks also to J. O'Rourke for his interest and useful comments on earlier drafts of this paper. Furthermore we are grateful to several anonymous reviewers who prompted us to clarify the exposition of this work. Parts of this work were completed while the first named author participated in the REU program in the School of Math at Georgia Tech in the Summer of 2017.

\bibliographystyle{abbrv}
%\bibliography{references}

\end{document}